\numberwithin{equation}{section}
\newcommand\frg{\mathfrak{g}}
\newcommand\rk{\operatorname{rk}}
\newcommand\Res{\operatorname{Res}}
\newcommand\mon{\overline{\operatorname{M}}_{0,n}}
\newtheorem{theorem}{Theorem}[section]
\newtheorem{remark}[theorem]{ Remark}
\newtheorem{corollary}[theorem]{Corollary}
\newtheorem{proposition}[theorem]{Proposition}
\newtheorem{lemma}[theorem]{Lemma}
\newtheorem{definition}[theorem]{Definition}
\newtheorem{example}[theorem]{\bf Example}
\begin{document}
\title{Rank-level duality and Conformal Block divisors }

\author{Swarnava Mukhopadhyay}

\address{Department of Mathematics\\ University of Maryland\\ Mathematics Building
\\ College Park, MD 20742-4015}
\email{swarnava@umd.edu}
\begin{abstract} We describe new relations among conformal block divisors in $\operatorname{Pic}(\overline{\operatorname{M}}_{0,n})$. These relations appear from various rank-level dualities of conformal blocks on $\mathbb{P}^1$ with $n$-marked points. We also give a coordinate free description of rank-level duality maps on $\overline{\mathcal{M}}_{g,n}$. 
\end{abstract}

\maketitle
\section{Introduction}Let $\frg$ be a simple Lie algebra, $\mathfrak{h}$ a Cartan subalgebra and $\ell$ a positive integer. Consider an $n$-tuple $\vec{\lambda}\in P_{\ell}(\frg)^n$, where $P_{\ell}(\frg)$ denotes the set of dominant integral weights of level $\ell$ (see Section 2).  Corresponding to this data, we denote the conformal block bundle  on the moduli stack of stable curves of genus $g$ by $\mathbb{V}_{\vec{\lambda}}(\frg,\ell)$. The fibers of conformal block bundles are referred to as conformal blocks and the first Chern classes $c_1(\mathbb{V}_{\vec{\lambda}}(\frg,\ell))$ are known as conformal block divisors.  The ranks $\operatorname{rk}\mathbb{V}_{\vec{\lambda}}(\frg,\ell)$ of conformal block bundles $\mathbb{V}_{\vec{\lambda}}(\frg,\ell)$ are given by the celebrated Verlinde formula.  
We refer the reader to \cite{Sor, TUY, T} for more details. 

Rank-level duality is a duality between conformal blocks on $\mathbb{P}^1$ with $n$-marked points ( with chosen coordinates around the marked points ) associated to two different Lie algebras. T. Nakanishi and A. Tsuchiya proved (see \cite{NT}) that on $\mathbb{P}^1$, certain conformal blocks of $\mathfrak{sl}(r)$ at level $s$ are dual to conformal blocks of $\mathfrak{sl}(s)$ at level $r$. Rank-level duality statements between conformal blocks for symplectic groups was proved by T. Abe \cite{A}. In \cite{M2}, we used diagram automorphisms and T. Abe's result to produce new symplectic rank-level dualities. A rank-level duality for odd orthogonal groups was proved in \cite{M1}. 

In this work, by exploiting the rank-level duality isomorphisms, we explicitly relate conformal block divisors on $\mon$ given by different sets of data (cf. Theorem \ref{main}). By taking advantage of the fact that conformal blocks bundles are globally generated on $\mon$,  we show vanishing of particular conformal block divisors in type A (cf. Corollary \ref{vanishing}). We also give a coordinate free  description of rank-level duality maps on $\overline{\mathcal{M}}_{g,n}$ which enable us to formulate the relations in Theorem \ref{main}. The description of rank-level duality maps on $\overline{\mathcal{M}}_{g,n}$ is non-trivial as it requires twisting by psi classes to make the expression coordinate free.  
Before describing our findings precisely, we put these results into context both in terms of representation theory and with respect to questions about the birational geometry of the moduli space of curves.

Conformal block divisors are base point free, and hence give rise to morphisms on the moduli space $\mon$.  In particular, the conformal block divisors are nef: they non-negatively intersect all curves in $\mon$. From a  representation theoretic perspective, one would like to identify, for a given triple $(\frg, \vec{\lambda}, \ell)$, conditions under which the nonzero-ness of the corresponding class $c_1(\mathbb{V}_{\vec{\lambda}}(\frg,\ell))$ would be guaranteed.  Similarly, one would like to be able to identify different triples which give rise to the same classes.    From the perspective of the divisor theory on $\mon$, such questions of vanishing and identities are motivated by the question whether the cone generated by nef divisors is polyhedral. Many of the conformal block divisors have been shown to be extremal in the nef cone, and questions of finite generation of cones generated by these divisors also remain open.

In \cite{BGM}, it is shown that for $\ell$ above the so-called {\em{critical level}} ({\em{ theta level}} ), conformal block divisors $c_1(\mathbb{V}_{\vec{\lambda}}(\mathfrak{sl}(r),\ell))$ ( for arbitrary $\frg$ ) on $\mon$ are zero. We recover that result in Section \ref{strangevanishing} as a corollary to Theorem \ref{main}.  Numerous identities between conformal block divisors given by different sets of data have been established using various methods by several authors. We refer the reader to \cite{AGSS, AGS, BGM, F, GG} for more details. The identities established in Theorem \ref{main}, and the vanishing of conformal blocks in type A described in Corollary \ref{vanishing} are distinct from earlier results both in terms of the statements
as well as the methods of proof. However, like the earlier results, the existence of
these identities provides  positive support to a question in \cite{F} about the cone of conformal block divisors being polyhedral.

\subsection{Precise Results} After establishing notation and recalling some definitions, we will be able to state the main result of this paper in this section. We first recall the notion of Dynkin index of an embedding.

\subsubsection{Dynkin index}Let $\mathfrak{s}$, $\frg$ be two simple Lie algebras and $\phi : \mathfrak{s} \rightarrow \mathfrak{g}$ an embedding of Lie algebras. Let $(,)_{\mathfrak{s}}$ and $(,)_{\frg}$ denote the normalized Cartan killing forms such that the the length of the longest root is $2$. We define the Dynkin index of $\phi$ to be the unique integer $d_{\phi}$ satisfying 
$$(\phi(x), \phi(y))_{\frg}=d_{\phi}(x,y)_{\mathfrak{s}},$$ for all $x, y \in \mathfrak{s}$. When $\mathfrak{s}=\mathfrak{g}_1\oplus \mathfrak{g}_2$ is semisimple, we define the Dynkin multi-index of $\phi=\phi_1\oplus \phi_2:\mathfrak{g}_1\oplus\mathfrak{g}_2 \rightarrow \mathfrak{g}$ to be $d_{\phi}=(d_{\phi_1}, d_{\phi_2})$. We now recall the definition of conformal embeddings.

\subsubsection{Conformal embedding}\label{confemb}

Let $\phi=(\phi_1,\phi_2): \mathfrak{s}=\mathfrak{g}_1\oplus \mathfrak{g}_2 \rightarrow \frg$ be an embedding of Lie algebras with Dynkin multi-index $\ell=(\ell_1,\ell_2)$. We define $\phi$ to be a conformal embedding $\mathfrak{s}$ in $\mathfrak{g}$  if the following equality holds:
$$\frac{\ell_1\dim{\frg_1}}{g_1^*+\ell_1}+\frac{\ell_2\dim{\frg_2}}{g_2^*+\ell_2}=\frac{\dim{\frg}}{g^*+1},$$ where $g_1$, $g_2$ and $g$ are the dual Coxeter number of the Lie algebras $\frg_1$, $\frg_2$ and $\frg$ respectively. Many familiar and important embeddings are conformal. For a complete list of conformal embeddings we refer the reader to \cite{BB}.

\subsubsection{The main result} Let $\mathfrak{h}^*$ be the dual to a Cartan subalgebra of $\mathfrak{h}$ of $\frg$ and $Q$ be the root lattice of $(\frg,\mathfrak{h})$. For $\Lambda \in \mathfrak{h}^*$, we define a subset $P^{\Lambda}_{\ell}(\frg):=\{\lambda \in P_{\ell}(\frg)| \Lambda-\lambda\in Q\}$. Consider a conformal embedding (see Section \ref{conformal} for properties) $\phi:\frg_1\oplus \frg_2 \rightarrow \frg$, where $\frg_1,\frg_2$ and $\frg$ are simple Lie algebras with Dynkin multi-index $\ell=(\ell_1, \ell_2)$. We extend it to a map of affine Lie algebras $\widehat{\phi}:\widehat{\frg}_1\oplus \widehat{\frg}_2\rightarrow \widehat{\frg}$. Consider a level one integrable highest weight module $\mathcal{H}_{\Lambda}(\frg)$, and restrict it to $\widehat{\frg}_1\oplus \widehat{\frg}_2$. The module $\mathcal{H}_{\Lambda}(\frg)$ decomposes into irreducible integrable $\widehat{\frg}_1\oplus \widehat{\frg}_2$-modules of level $\ell=(\ell_1,\ell_2)$ as follows:$$\bigoplus_{(\lambda,\mu)\in \widetilde{B}(\Lambda)}\widetilde{m}_{\lambda, \mu}^{\Lambda}\mathcal{H}_{\lambda}(\frg_1)\otimes \mathcal{H}_{\mu}(\frg_2)\simeq \mathcal{H}_{\Lambda}(\frg),$$
 where $\widetilde{B}(\Lambda)$ is a finite set parametrizing the components of the decomposition and $\widetilde{m}_{\lambda, \mu}^{\Lambda}$ is the multiplicity of the component $\mathcal{H}_{\lambda}(\frg_1)\otimes \mathcal{H}_{\mu}(\frg_2)$. It is important to point out that for $(\lambda,\mu) \in \widetilde{B}(\Lambda)$, it is necessary that $(\lambda,\mu) \in P^{\Lambda}_{\ell_1}(\frg_1)\times P^{\Lambda}_{\ell_2}(\frg_2)$. We refer the reader to Section \ref{conformal} for more details. For any subset $A=\{a_1,\dots, a_i\}$ of $\{1,\dots, n\}$ and $\vec{\lambda} \in P_{\ell}(\frg)^n$, we denote the $i$-tuple $\{\lambda_{a_1},\dots \lambda_{a_i}\}$ by $\vec{\lambda}_{A}$. 
\begin{remark}
 The following assumptions are motivated by rank-level duality of conformal blocks. All conformal embeddings for which rank-level duality hold are known to satisfy these assumptions. Examples include the embeddings $\mathfrak{sl}(r)\oplus \mathfrak{sl}(s)\rightarrow \mathfrak{sl}(rs)$ and $\mathfrak{sp}(2r)\oplus \mathfrak{sp}(2s)\rightarrow \mathfrak{so}(4rs)$. We refer the reader to \cite{M1} for more details.  
 \end{remark}
Consider $\vec{\Lambda}\in P_1(\frg)^n$, $\vec{\lambda}\in P_{\ell_1}(\frg_1)^n$ and $\vec{\mu}\in P_{\ell_2}(\frg_2)^n$ such that the following holds:
\begin{enumerate}

\item  $\operatorname{rk}\mathbb{V}_{\vec{\Lambda}}(\frg,1)=1$, $\operatorname{rk}\mathbb{V}_{\vec{\lambda}}(\frg_1,\ell_1)=\operatorname{rk} \mathbb{V}_{\vec{\mu}}(\frg_2,\ell_2)$, and $(\lambda_i,\mu_i)\in \widetilde{B}(\Lambda_i)$ for $1\leq i \leq n$.
\item There exists a bijection $f_{\Lambda}$ between $P^{\Lambda}_{\ell_1}(\frg_1)$ and $P^{\Lambda}_{\ell_2}(\frg_2)$ with the property $(\lambda, \mu) \in \widetilde{B}(\Lambda)$, where $\mu=f_{\Lambda}(\lambda)$, $A$ is a subset of $\{1,\dots,n \}$ with $|A|>2$ and $\operatorname{rk}\mathbb{V}_{\vec{\Lambda}_A,\Lambda}(\frg,1)=1$.

\item For every $(\lambda, \mu)\in \widetilde{B}(\Lambda)$, we assume $$\operatorname{rk} \mathbb{V}_{\vec{\lambda}_{A}, \lambda}(\frg_1,\ell_1)=\operatorname{rk} \mathbb{V}_{\vec{\mu}_{A},\mu}(\frg_2,\ell_2) \ \text{and} \ \operatorname{rk} \mathbb{V}_{\vec{\lambda}_{A^c}, \lambda^*}(\frg_1,\ell_1)=\operatorname{rk} \mathbb{V}_{\vec{\mu}_{A^c},\mu^*}(\frg_2,\ell_2),$$ where $\Lambda$ is the unique weight in $P_1(\frg)$ such that $\operatorname{rk}\mathbb{V}_{\vec{\Lambda}_A,\Lambda}(\frg,1)=1$ and $\mu=f_{\Lambda}(\lambda)$.

\end{enumerate}
 The main result of the paper is the following:
\begin{theorem}\label{main}
With the above assumptions, we have the following relation among conformal block divisors in $\operatorname{Pic}(\mon)$:
\begin{eqnarray*}
c_1( \mathbb{V}_{\vec{\lambda}}(\frg_1,\ell_1))+ c_1(\mathbb{V}_{\vec{\mu}}(\frg_2,\ell_2))=\operatorname{rk}\mathbb{V}_{\vec{\lambda}}(\frg_1,\ell_1).\left\{c_1(\mathbb{V}_{\vec{\Lambda}}(\frg,1))+ \sum_{j=1}^n n_{\lambda_j,\mu_j}^{\Lambda_j} \psi_j\right\}\\
\hspace{8cm}- \sum_{i=2}^{\lfloor \frac{n}{2} \rfloor}\epsilon_i\left\{\sum_{\substack{A \subseteq \{1,\dots, n\}\\ |A|=i}} b_{A,A^c}[D_{A,A^c}]\right\},
\end{eqnarray*} where $[D_{A,A^c}]$ denotes the class of the boundary divisor corresponding to the partition $A\cup A^c=\{1,\dots, n\}$, $\epsilon_i=\frac{1}{2}$ if $i=n/2$ and one otherwise, $\psi_j$ is the j-th psi class,  $n^{\Lambda_j}_{\lambda_j,\mu_j}$ and $b_{A,A^c}$ are non-negative integers as defined in Section \ref{conformal} and Section \ref{ranklevel} respectively.
 \end{theorem}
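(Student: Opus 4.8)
The plan is to reduce the asserted identity in $\operatorname{Pic}(\mon)_{\mathbb{Q}}$ to a coefficient-by-coefficient comparison in the standard spanning set $\{\psi_1,\dots,\psi_n\}\cup\{[D_{A,A^c}]\}$, and to evaluate each coefficient using Fakhruddin's explicit first Chern class formula \cite{F} together with the three numerical hypotheses. Recall from \cite{F} that for a simple $\mathfrak{a}$, level $m$ and weights $\vec\nu$ one has on $\mon$
\begin{equation*}
c_1(\mathbb{V}_{\vec\nu}(\mathfrak{a},m))=\sum_{i=1}^n \operatorname{rk}\mathbb{V}_{\vec\nu}(\mathfrak{a},m)\,\Delta^{\mathfrak{a},m}_{\nu_i}\,\psi_i-\sum_{A}\Bigl(\sum_{\sigma\in P_m(\mathfrak{a})}\Delta^{\mathfrak{a},m}_{\sigma}\,\operatorname{rk}\mathbb{V}_{\vec\nu_A,\sigma}(\mathfrak{a},m)\,\operatorname{rk}\mathbb{V}_{\vec\nu_{A^c},\sigma^*}(\mathfrak{a},m)\Bigr)[D_{A,A^c}],
\end{equation*}
where $\Delta^{\mathfrak{a},m}_{\nu}=(\nu,\nu+2\rho)/2(m+h^\vee(\mathfrak{a}))$ is the normalized conformal weight, $\sigma^*$ is the dual weight, and the second sum runs over the boundary divisors. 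Since $\operatorname{Pic}(\mon)_{\mathbb{Q}}$ is spanned by the $\psi_i$ and the $[D_{A,A^c}]$, it suffices to check that the two sides of the theorem agree after extracting the coefficient of each $\psi_j$ and of each boundary class.

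For the $\psi_j$-coefficients, I would write $r:=\operatorname{rk}\mathbb{V}_{\vec\lambda}(\frg_1,\ell_1)=\operatorname{rk}\mathbb{V}_{\vec\mu}(\frg_2,\ell_2)$ by hypothesis (1). The left-hand side then contributes $r\bigl(\Delta^{\frg_1,\ell_1}_{\lambda_j}+\Delta^{\frg_2,\ell_2}_{\mu_j}\bigr)$ and the right-hand side contributes $r\,\Delta^{\frg,1}_{\Lambda_j}+r\,n^{\Lambda_j}_{\lambda_j,\mu_j}$, so the comparison reduces to the identity $\Delta^{\frg_1,\ell_1}_{\lambda_j}+\Delta^{\frg_2,\ell_2}_{\mu_j}=\Delta^{\frg,1}_{\Lambda_j}+n^{\Lambda_j}_{\lambda_j,\mu_j}$. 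This is exactly the definition of the defect $n^{\Lambda_j}_{\lambda_j,\mu_j}$: because $\phi$ is a conformal embedding, the Sugawara tensor of $(\frg,1)$ equals the sum of those of $(\frg_1,\ell_1)$ and $(\frg_2,\ell_2)$, so for any branching pair $(\lambda_j,\mu_j)\in\widetilde B(\Lambda_j)$ the component $\mathcal{H}_{\lambda_j}(\frg_1)\otimes\mathcal{H}_{\mu_j}(\frg_2)$ sits inside $\mathcal{H}_{\Lambda_j}(\frg)$ at the integer grade $n^{\Lambda_j}_{\lambda_j,\mu_j}=\Delta^{\frg_1,\ell_1}_{\lambda_j}+\Delta^{\frg_2,\ell_2}_{\mu_j}-\Delta^{\frg,1}_{\Lambda_j}\ge 0$; this is where the integrality and non-negativity of $n$ (as set up in Section~\ref{conformal}) come from.

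The main work is the boundary comparison, which simultaneously exhibits the $b_{A,A^c}$ and proves their non-negativity. Fix $A$ with $2\le|A|\le\lfloor n/2\rfloor$. By hypothesis (1) the block $\mathbb{V}_{\vec\Lambda}(\frg,1)$ has rank one, so its factorization across $D_{A,A^c}$ has a single surviving term: there is a unique $\Lambda\in P_1(\frg)$ with $\operatorname{rk}\mathbb{V}_{\vec\Lambda_A,\Lambda}(\frg,1)=\operatorname{rk}\mathbb{V}_{\vec\Lambda_{A^c},\Lambda^*}(\frg,1)=1$, and hence the $[D_{A,A^c}]$-coefficient of $r\,c_1(\mathbb{V}_{\vec\Lambda}(\frg,1))$ is $-r\,\Delta^{\frg,1}_{\Lambda}$. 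For the left-hand side I would factorize the $\frg_1$- and $\frg_2$-blocks across the same node. Using the bijection $f_\Lambda$ of hypothesis (3) to match intermediate $\frg_1$-weights $\lambda$ with $\mu=f_\Lambda(\lambda)$ (so $(\lambda,\mu)\in\widetilde B(\Lambda)$), and the rank equalities of hypothesis (2) applied to both $A$ and $A^c$ to obtain $\operatorname{rk}\mathbb{V}_{\vec\lambda_A,\lambda}(\frg_1,\ell_1)\operatorname{rk}\mathbb{V}_{\vec\lambda_{A^c},\lambda^*}(\frg_1,\ell_1)=\operatorname{rk}\mathbb{V}_{\vec\mu_A,\mu}(\frg_2,\ell_2)\operatorname{rk}\mathbb{V}_{\vec\mu_{A^c},\mu^*}(\frg_2,\ell_2)=:R_\lambda$, the combined left-hand $[D_{A,A^c}]$-coefficient becomes $-\sum_\lambda(\Delta^{\frg_1,\ell_1}_{\lambda}+\Delta^{\frg_2,\ell_2}_{\mu})R_\lambda$. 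Substituting the defect relation for the node-pair and using factorization $\sum_\lambda R_\lambda=r$, this equals $-r\,\Delta^{\frg,1}_{\Lambda}-\sum_\lambda n^{\Lambda}_{\lambda,f_\Lambda(\lambda)}R_\lambda$. Comparing coefficients therefore yields
\begin{equation*}
\epsilon_{|A|}\,b_{A,A^c}=\sum_{\lambda}n^{\Lambda}_{\lambda,f_\Lambda(\lambda)}\,\operatorname{rk}\mathbb{V}_{\vec\lambda_A,\lambda}(\frg_1,\ell_1)\,\operatorname{rk}\mathbb{V}_{\vec\lambda_{A^c},\lambda^*}(\frg_1,\ell_1),
\end{equation*}
which one recognizes as $\epsilon_{|A|}$ times the non-negative integer $b_{A,A^c}$ of Section~\ref{ranklevel}, the factor $\epsilon_{|A|}$ absorbing the double-counting of the unordered partition $\{A,A^c\}$ when $|A|=n/2$.

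The hardest and most delicate step is this boundary comparison: one must verify that the intermediate-weight cosets match so that $f_\Lambda$ genuinely puts the $\frg_1$- and $\frg_2$-factorization sums in bijection, and that the dual weights $(\lambda^*,\mu^*)$ again form a branching pair, so that hypothesis (2) may be invoked on $A^c$. This compatibility of $f_\Lambda$ with duality and with the coset constraint $P^{\Lambda}_{\ell_i}(\frg_i)$ is precisely what the three hypotheses are engineered to guarantee, and it is the only point where all of them are used together; the conformal-embedding condition enters only through the defect relation, which is used in both the $\psi$- and the boundary-coefficient computations. Once this matching is established, the identity holds term by term, completing the proof.
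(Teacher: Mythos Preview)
Your approach is essentially identical to the paper's: both apply Fakhruddin's first Chern class formula rewritten in terms of $\psi$-classes (the paper's Proposition~\ref{rewrite}), compare the $\psi_j$-coefficients using the defect relation $n^{\Lambda_j}_{\lambda_j,\mu_j}=\Delta_{\lambda_j}+\Delta_{\mu_j}-\Delta_{\Lambda_j}$, and then match the $[D_{A,A^c}]$-coefficients using factorization, the bijection $f_\Lambda$, and the rank equalities of hypothesis~(2).

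Two small corrections. First, the ``coset constraint'' you flag---that only $\lambda\in P^{\Lambda}_{\ell_1}(\frg_1)$ contribute to the factorization sum---is \emph{not} a consequence of the three hypotheses; it is the standard root-lattice vanishing $\operatorname{rk}\mathbb{V}_{\vec\lambda_A,\lambda}(\frg_1,\ell_1)=0$ whenever $\lambda+\sum_{a\in A}\lambda_a\notin Q$, which the paper isolates as a separate one-line lemma (Lemma~\ref{calculation0}). Second, the factor $\epsilon_{|A|}$ on the left of your displayed boundary identity is spurious: the right-hand side of that equation is already $b_{A,A^c}$ by definition, and the $\epsilon_i$ appears uniformly in the boundary sums on both sides of the theorem, so it cancels upon extracting the coefficient of a fixed $[D_{A,A^c}]$.
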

 
 \begin{remark}Since $\operatorname{rk}\mathbb{V}_{\vec{\Lambda}}(\frg,1)=1$, by factorization (see \cite{TUY}) we know that for every partition $A\cup A^c=\{1,2,\dots, n\}$ there exists a unique $\Lambda \in P_1(\frg)$ such that $\operatorname{rk}\mathbb{V}_{\vec{\Lambda}_A,\Lambda}(\frg,1)=1$. The weight $\Lambda$ depends on the partition but for convenience we do not include it in the notation. 
\end{remark}

\begin{remark}
The integer $b_{A,A^c}$ is the order of vanishing of the determinant of the rank-level duality map along the divisor $D_{A,A^c}$ of $\mon$. This is proved in Proposition \ref{geo}.
\end{remark}

 \subsection{Idea of proof}We give two different ways to prove Theorem \ref{main}. The relations described in Theorem \ref{main} were first conjectured and proved using the geometric approach described below. Later we observed that the relations in Theorem \ref{main} suggested an alternate form of Fakhruddin's Chern class formula (see Proposition \ref{rewrite}). We also give a proof of Theorem \ref{main} using the Chern class formula in Proposition \ref{rewrite}. The assumptions in the geometric approach are slightly stronger than in Theorem \ref{main}. We briefly outline both approaches below. 

\subsubsection{Geometric Approach} We give a coordinate free description of rank-level duality maps on the Deligne-Mumford compactification of $\overline{\mathcal{M}}_{g,n}$. The obvious generalization by replacing the sheaf of three conformal blocks (with choice of coordinates) involved in a rank-level duality map by their coordinate free versions does not work since $\widetilde{B}(\Lambda)$ is in general not equal to $B(\Lambda)$ for a level one weight $\Lambda$ of $\widehat{\frg}$ (see Section \ref{conformal}). To incorporate this, we need to twist the rank-level duality map by suitable psi-classes. Our main technique is a coordinate free construction of vertex algebras described in \cite{BF}. We refer the reader to Section \ref{geometry} for more details.

Assuming that rank-level duality holds on $\mathbb{P}^1$ with $n$-marked points and chosen coordinates for the triple $(\vec{\lambda},\vec{\mu},\vec{\Lambda})$, we calculate the order of vanishing $b_{A,A^c}$ of the determinant of the rank-level duality map on a boundary divisor $D_{A,A^c}$. This gives an isomorphism of two line bundles on $\mon$ and Theorem \ref{main} follows by taking Chern classes. We refer the reader to Proposition \ref{geometry1} for a precise result.

\subsubsection{Chern class formula}  The alternate approach to the proof of Theorem \ref{main} is via Fakhruddin's formula for Chern classes of conformal block divisors. We rewrite Fakhruddin's formula with psi classes by using a Lemma in \cite{FG}. Once this is done, the rest of the proof is a direct calculation to show that the left hand side of the relation in Theorem \ref{main} is same as the right hand side. 
\subsection{Acknowledgments}
I thank Prakash Belkale for helpful discussions during the preparation of this manuscript and for pointing out the twisting by psi classes. I would also like to thank Angela Gibney and Han-Bom Moon for useful discussions on the birational geometry of $\mon$ and for detailed comments on an earlier version of this paper. I also thank the anonymous referee for valuable comments. 
 
\section{Notation and some definitions}
Let $\frg$ be a simple Lie algebra over $\mathbb{C}$ and $\mathfrak{h}$ a Cartan subalgebra of $\frg$. The root system associated to $(\frg,\mathfrak{h})$ is denoted by $Q$. The Cartan Killing form $(,)_{\frg}$  on $\frg$ is normalized such that $(\theta, \theta)=2$, where $\theta $ is the longest root. We identify $\mathfrak{h}$ with $\mathfrak{h}^*$ using $(,)_{\frg}$. We define the affine Lie algebra $\widehat{\frg}$ to be 
$$\widehat{\frg}:= \frg\otimes \mathbb{C}((z)) \oplus \mathbb{C}c,$$ where $c$ belongs to the center of $\widehat{\frg}$ and the Lie bracket is given as follows:
$$[X\otimes f(z), Y\otimes g(z)]=[X,Y]\otimes f(z)g(z) + (X,Y)\Res_{z=0}(gdf).c,$$ where $X,Y \in \frg$ and $f(z),g(z) \in \mathbb{C}((z))$. Let $X(n)=X\otimes z^n$ and $X=X(0)$ for any $X \in \frg $ and $n \in \mathbb{Z}$. The finite dimensional Lie algebra $\frg$ can be realized as a subalgebra of $\widehat{\frg}$ under the identification of $X$ with $X(0)$. 

The finite dimensional irreducible modules of $\frg$ are parametrized by the set of dominant integral weights $P_{+} \subset \mathfrak{h}^*$. Let $V_{\lambda}(\frg)$ denote the irreducible module of highest weight $\lambda \in P_{+}$. We fix a positive integer $\ell$ which we call the level. The set of dominant integral weights of level $\ell$ is defined as follows:
$$P_{\ell}(\frg):=\{ \lambda \in P_{+} | (\lambda, \theta) \leq \ell\}.$$
For each $\lambda \in P_{\ell}(\frg)$ there is a unique irreducible integrable highest weight $\widehat{\frg}$-module $\mathcal{H}_{\lambda}(\frg)$ which satisfies the following properties: 
\begin{enumerate}
\item $V_{\lambda}(\frg) \subset \mathcal{H}_{\lambda}(\frg),$
\item The central element $c$ of $\widehat{\frg}$ acts by the scalar $\ell$.
\end{enumerate}

\section{Properties of Conformal Embeddings}\label{conformal}In this section, we recall an important property that characterizes conformal embeddings ( see \ref{confemb} ). First we define the notion of trace anomaly following \cite{KW}.

Let $\frg$ is simple, we define for any level $\ell$ and a dominant weight ${\lambda}$ of level $\ell$, the trace anomaly $\Delta_{\lambda}(\frg,\ell)$ to be the number 
$\frac{({\lambda}, {\lambda}+2{\rho})}{2(g^*+\ell)},$ where $g^*$ is the dual Coxeter number of $\frg$ and ${\rho}$ denotes the half sum of positive roots and $(,)$ is the normalized Cartan killing form. 
If $\mathfrak{g}$ is semisimple, we define the trace anomaly by taking sum of the conformal anomalies over all simple components. 

Let $\phi: \frg_1\oplus \frg_2 \rightarrow \frg$ be an embedding of Lie algebras with Dynkin multi-index $(\ell_1, \ell_2)$. We extend the map $\phi$ to a map of affine Lie algebras in the obvious way:
$$\widehat{\phi}:\widehat{\frg}_1\oplus \widehat{\frg}_2 \rightarrow \widehat{\frg}.$$ 

We consider $\Lambda \in P_{1}(\frg)$ and let $V_{\Lambda}(\frg)$ denote the highest weight irreducible module of the Lie algebra $\frg$. We restrict $V_{\Lambda}(\frg)$ to $\frg_1\oplus \frg_2$. The $\frg$-module $V_{\Lambda}(\frg)$ decomposes into direct sum of $\frg_1\oplus \frg_2$-modules as follows 
$$V_{\Lambda}(\frg) \simeq \bigoplus_{(\lambda, \mu)\in B(\Lambda)}m_{\lambda,\mu}^{\Lambda}V_{\lambda}(\frg_1)\otimes V_{\mu}(\frg_2),$$ where $m_{\lambda,\mu}^{\Lambda}$ is the multiplicity of the component $V_{\lambda}(\frg_1)\otimes V_{\mu}(\frg_2)$ and $B(\Lambda)$ is a finite set. Similarly for $\Lambda \in P_{1}(\frg)$, we consider the highest weight integrable irreducible $\widehat{\frg}$-module $\mathcal{H}_{\Lambda}(\frg)$ and restrict it to $\widehat{\frg}_1\oplus \widehat{\frg}_2$. The module $\mathcal{H}_{\Lambda}(\frg)$ decomposes into $\widehat{\frg}_1\oplus \widehat{\frg}_2$ as follows:
$$\mathcal{H}_{\Lambda}(\frg)\simeq \bigoplus_{(\lambda, \mu)\in \widetilde{B}(\Lambda)}\widetilde{m}_{\lambda, \mu}^{\Lambda}\mathcal{H}_{\lambda}(\frg_1)\otimes \mathcal{H}_{\mu}(\frg_2).$$ Since the integrable modules are infinite dimensional, $|\widetilde{B}(\Lambda)|$ could be infinite. It is easy to see that $B(\Lambda) \subseteq \widetilde{B}(\Lambda)$. In most cases $B(\Lambda)$ is strictly contained in $\widetilde{B}(\Lambda)$. We recall the following from \cite{KW}: 
\begin{enumerate}
\item An embedding is conformal if and only if $\widetilde{B}(\Lambda)$ is finite for all level one weights $\Lambda$. 
\item The action of the Virasoro operators are the same, i.e. for any $n$ the following equality holds:
$$L_n^{\mathfrak{s}}=L_n^{\frg} \in \operatorname{End}(\mathcal{H}_{\Lambda}(\frg)),$$ where the $n$-th Virasoro operators $L_n^{\mathfrak{s}}$ (resp $L_n^{\mathfrak{g}})$ acts at level $\ell$ (resp level one) on the module $\mathcal{H}_{\Lambda}(\frg)$.
\item Let $\frg_1\oplus \frg_2\rightarrow \frg$ be a conformal embedding. If $(\lambda, \mu ) \in \widetilde{B}(\Lambda)$, then $\Delta_{\lambda}({\frg_1}, \ell_1)+\Delta_{\mu}(\frg_2,\ell_2)-\Delta_{\Lambda}(\frg, 1)$ is a non negative integer $n^{\Lambda}_{\lambda,\mu}$. Furthermore the difference of trace anomalies $n^{\Lambda}_{\lambda,\mu}$ is zero if and only if $(\lambda,\mu) \in B(\Lambda)$.
\end{enumerate}
 We only consider conformal embeddings for the rest of this paper. 

\subsection{Examples of Branching Rules}\label{branchingrules} In this section, we write out the branching rules for the conformal embedding $\mathfrak{sl}(2)\oplus \mathfrak{sl}(3)\rightarrow \mathfrak{sl}(6)$. The Dynkin multi-index of the embedding is $(3,2)$. We will use this in Section \ref{explicit} to compute examples of the relations that come from Theorem \ref{main}. The branching rules  below are computed using \cite{ABI}. They are as follows:
\begin{itemize}
\item $\mathcal{H}_{0}(\mathfrak{sl}(6),1) \simeq \mathcal{H}_{0}(\mathfrak{sl}(2),3)\otimes \mathcal{H}_{0}(\mathfrak{sl}(3),2) \oplus \mathcal{H}_{2\omega_1}(\mathfrak{sl}(2),3)\otimes \mathcal{H}_{\omega_1+\omega_2}(\mathfrak{sl}(3),2)$, where $n^{\Lambda}_{\lambda,\mu}$ is $0$ and $1$ respectively.
\item $\mathcal{H}_{\omega_1}(\mathfrak{sl}(6),1) \simeq \mathcal{H}_{\omega_1}(\mathfrak{sl}(2),3)\otimes \mathcal{H}_{\omega_1}(\mathfrak{sl}(3),2) \oplus \mathcal{H}_{3\omega_1}(\mathfrak{sl}(2),3)\otimes \mathcal{H}_{2\omega_2}(\mathfrak{sl}(3),2)$, where $n^{\Lambda}_{\lambda,\mu}$ is $0$ and $1$ respectively.
\item $\mathcal{H}_{\omega_2}(\mathfrak{sl}(6),1) \simeq \mathcal{H}_{2\omega_1}(\mathfrak{sl}(2),3)\otimes \mathcal{H}_{\omega_2}(\mathfrak{sl}(3),2) \oplus \mathcal{H}_{0}(\mathfrak{sl}(2),3)\otimes \mathcal{H}_{2\omega_1}(\mathfrak{sl}(3),2)$, where $n^{\Lambda}_{\lambda,\mu}$ is $0$ for both the components. 
\item $\mathcal{H}_{\omega_3}(\mathfrak{sl}(6),1) \simeq \mathcal{H}_{3\omega_1}(\mathfrak{sl}(2),3)\otimes \mathcal{H}_{0}(\mathfrak{sl}(3),2) \oplus \mathcal{H}_{\omega_1}(\mathfrak{sl}(2),3)\otimes \mathcal{H}_{\omega_1+\omega_2}(\mathfrak{sl}(3),2)$,  where $n^{\Lambda}_{\lambda,\mu}$ is $0$ for both the components. 
\item $\mathcal{H}_{\omega_4}(\mathfrak{sl}(6),1) \simeq \mathcal{H}_{0}(\mathfrak{sl}(2),3)\otimes \mathcal{H}_{2\omega_2}(\mathfrak{sl}(3),2) \oplus \mathcal{H}_{2\omega_1}(\mathfrak{sl}(2),3)\otimes \mathcal{H}_{\omega_1}(\mathfrak{sl}(3),2)$, where $n^{\Lambda}_{\lambda,\mu}$ is $0$ for both the components.
\item $\mathcal{H}_{\omega_5}(\mathfrak{sl}(6),1) \simeq \mathcal{H}_{\omega_1}(\mathfrak{sl}(2),3)\otimes \mathcal{H}_{\omega_2}(\mathfrak{sl}(3),2) \oplus \mathcal{H}_{3\omega_1}(\mathfrak{sl}(2),3)\otimes \mathcal{H}_{2\omega_1}(\mathfrak{sl}(3),2)$, where $n^{\Lambda}_{\lambda,\mu}$ is $0$ and $1$ respectively. 
\end{itemize}

Observe that in all the cases above, the multiplicity $m^{\Lambda}_{\lambda,\mu}$ of a component is always one. This is not true for arbitrary conformal embeddings.

\section{Sheaf of conformal blocks}
In this section we recall the definition of the sheaf of conformal blocks following \cite{TUY}. We also give a coordinate free description of conformal blocks following \cite{Sor} and \cite{T}.
By a $n$-pointed nodal curve, we mean an algebraic curve (with at most nodal singularities) over $\mathbb{C}$ with $n$-distinct marked (smooth) points which satisfies the Deligne-Mumford stability conditions.
A family of $n$-pointed nodal curves is a proper map $\pi :\mathcal{C}\rightarrow \mathcal{B}$ of relative dimension $1$ with sections $s_1,\dots, s_n$ such that the fiber $C_{b}$ with the marked points $s_1(b),\dots, s_n(b)$ is an $n$-pointed nodal curve of arithmetic genus $g$. 

\subsection{Conformal blocks with choice of coordinates}Consider a family $\mathcal{F}=(\pi:\mathcal{C}\rightarrow \mathcal{B};s_1,\dots,s_n, \xi_1,\dots, \xi_n)$ of $n$-pointed nodal curves of genus $g$ with sections $s_i$ and formal coordinates $\xi_i$ around the sections. Consider the $\mathcal{O}_\mathcal{B}$-module of affine Lie algebras  
$$\widehat{\frg}_n(\mathcal{B}):=\frg\otimes_{\mathbb{C}}(\bigoplus_{i=1}^n\mathcal{O}_{\mathcal{B}}((\xi_i))\oplus \mathcal{O}_{\mathcal{B}}.c,$$  where $c$ belongs to the center of $\widehat{\frg}_n(\mathcal{B})$. For $X_i \in \frg$ and $f_i \in \mathcal{O}_{\mathcal{B}}((\xi_i))$, the Lie bracket is defined as follows:
\begin{eqnarray*}
&&[(X_1\otimes f_1,\dots, X_n\otimes f_n), (Y_1\otimes f_1,\dots Y_n\otimes f_n)]\\
 &&\hspace{1cm} =([X_1,Y_1]\otimes f_1g_1,\dots, [X_n,Y_n]\otimes f_ng_n)+ \sum_{i=1}^n(X_i,Y_i)\Res_{\xi_i=0}(g_idf_i)c.
\end{eqnarray*}
We put $\widehat{\frg}(\mathcal{F})=\frg\otimes_{\mathbb{C}}\pi_*(\mathcal{O}_{\mathcal{C}}(*S))$, where $S=\sum_{i=1}^ns_i(\mathcal{B})$. Using the choice of formal coordinates, we may regard $\widehat{\frg}(\mathcal{F}) $ as a Lie subalgebra of $\widehat{\frg}_n(\mathcal{B})$. For $\vec{\lambda}\in P_{\ell}(\frg)^n$, we consider 
$$\mathcal{H}_{\vec{\lambda}}(\mathcal{B}):=\mathcal{O}_{\mathcal{B}}\otimes_{\mathbb{C}} \mathcal{H}_{{\lambda}_1}(\frg)\otimes_{\mathbb{C}}\dots \otimes_{\mathbb{C}} \mathcal{H}_{{\lambda}_n}(\frg).$$
\begin{definition}
The sheaf of covacua $\mathcal{V}_{\vec{\lambda}}(\mathcal{F})$ attached to the family $\mathcal{F}$ is defined to be $\mathcal{H}_{\vec{\lambda}}(\mathcal{B})/\widehat{\frg}(\mathcal{F})\mathcal{H}_{\vec{\lambda}}(\mathcal{B})$.
It is well known that $\mathcal{V}_{\vec{\lambda}}(\mathcal{F})$ is locally free of finite rank and it's dual $\mathcal{V}^{\dagger}_{\vec{\lambda}}(\mathcal{F})$ is known as the sheaf of conformal blocks. 
\end{definition}
\subsection{Coordinate free description}\label{coordinatefreekacmoody} Let $(\pi: \mathcal{C}\rightarrow \mathcal{B}; s_1\dots, s_n)$ be a family of $n$-pointed nodal curves with sections $s_1,\dots, s_n$. Let $S_i=\operatorname{Im}s_i$ and $\mathcal{I}_{S_i}$ be the ideal sheaf of $S_i$. Let $\widehat{\mathcal{O}}_{\mathcal{C}/S_i}$ denote the formal completion of $\mathcal{O}_{\mathcal{C}}$ along $S_i$. Let $K_{\mathcal{C}/S_i}=\lim_{p}\lim_{n}\mathcal{O}_{\mathcal{C}}(pS_i)/\mathcal{I}_{S_i}^{n+1}$, the sheaf of formal meromorphic functions along $S_i$. Let $\widehat{\frg}_{S_i}:=\frg\otimes K_{\mathcal{C}/S_i}\oplus \mathcal{O}_{\mathcal{B}}.c$ with the Lie bracket defined as in Section 2. 

Consider the Lie subalgebra $\widehat{\mathfrak{p}}_{S_i}:=\frg\otimes \widehat{\mathcal{O}}_{\mathcal{C}/S_i}\oplus \mathcal{O}_{\mathcal{B}}.c$ of $\widehat{\frg}_{S_i}$. For $\lambda \in P_{\ell}(\frg)$, let $V_{\lambda}$ be the irreducible finite dimensional $\frg$-module with highest $\lambda$. The $\frg\otimes \mathcal{O}_{\mathcal{B}}$-module structure on $V_{\lambda}$ extends to a $\widehat{\mathfrak{p}}_{S_i}$-module, where $c$ acts by multiplication by $\ell$ and $\frg\otimes \widehat{\mathcal{O}}_{\mathcal{C}/S_i}$ acts by evaluation along $S_i$. Let $M_{\lambda}({\mathcal{C}/S_i})=\operatorname{Ind}_{\widehat{\mathfrak{p}}_{S_i}}^{\widehat{\frg}_{S_i}}V_{\lambda}$ be the $\widehat{\frg}_{S_i}$-Verma module associated to $\lambda$. By PBW theorem, ${M}_{{\lambda}}(C/S_i)$ is isomorphic to $\mathcal{U}(\widehat{\frg}_{S_i})\otimes_{\mathcal{U}(\widehat{\mathfrak{p}}_{S_i})}V_{{\lambda}}$, where $\mathcal{U}({\mathfrak{a}})$ denotes the universal enveloping algebra of a Lie algebra $\mathfrak{a}$. The module $M_{\lambda}(\mathcal{C}/S_i)$ admits an unique irreducible quotient which we denote by $\mathbb{H}_{\lambda}(\mathcal{C}/S_i)$. 

The Lie algebra $\widehat{\frg}_n(\mathcal{C}/\mathcal{B}):=\frg\otimes_{\mathbb{C}}(\bigoplus_{i=1}^n K_{\mathcal{C}/S_i})\oplus \mathcal{O}_{\mathcal{B}}.c$ acts on $\mathbb{H}_{\vec{\lambda}}(\mathcal{C}/\mathcal{B}):=\otimes_{i=1}^n\mathbb{H}_{\lambda_i}(\mathcal{C}/S_i)$, where $\vec{\lambda}=(\lambda_1,\dots,\lambda_n)$ and $\lambda_i\in P_{\ell}(\frg)$. We can identity $\widehat{\frg}(\mathcal{C}/\mathcal{B}):=\frg\otimes_{\mathbb{C}}\pi_*(\mathcal{O}_{\mathcal{C}}(*S))$ as a Lie subalgebra of $\widehat{\frg}_n(\mathcal{C}/\mathcal{B})$.
\begin{definition}
The coordinate free sheaf of covacua $\mathbb{V}_{\vec{\lambda}}(\mathcal{C}/\mathcal{B})$ associated the family $(\pi: \mathcal{C}\rightarrow \mathcal{B}; s_1,\dots, s_n)$ and $\vec{\lambda}\in P_{\ell}(\frg)^n$ is defined to be the following sheaf of coinvariants:
$$\mathbb{H}_{\vec{\lambda}}(\mathcal{C}/\mathcal{B})/\widehat{\frg}(\mathcal{C}/\mathcal{B}). \mathbb{H}_{\vec{\lambda}}(\mathcal{C}/\mathcal{B}).$$
\end{definition}
\begin{remark}
If we choose formal coordinates $\xi_i$ of the sections $s_i$, then $\widehat{\mathcal{O}}_{\mathcal{C}/S_i}\simeq \mathcal{O}_{\mathcal{B}}[[\xi_i]]$ and $K_{\mathcal{C}/S_i}\simeq \mathcal{O}_{\mathcal{B}}((\xi_i))$. Further $\mathbb{H}_{\vec{\lambda}}(\mathcal{C}/\mathcal{B})\simeq \mathcal{H}_{\vec{\lambda}}(\mathcal{B})$ and $\mathbb{V}_{\vec{\lambda}}(\mathcal{C}/\mathcal{B})\simeq \mathcal{V}_{\vec{\lambda}}(\mathcal{F})$.
\end{remark}

\section{The group $\operatorname{Aut}\mathcal{O}$ and a coordinate free construction}

In this section we use a coordinate free description of Vertex algebras given in \cite{BF} and give a coordinate free description of highest weight integrable $\widehat{\frg}$-modules.
 
\subsection{Exponentiating vector fields}We recall a few facts about exponentiating  an action of a vector field on a module following Section 6.3 in \cite{BF}. Let $\mathcal{O}$ denote the complete topological $\mathbb{C}$-algebra $\mathbb{C}[[z]]$ and let $\operatorname{Aut}\mathcal{O}$ be the group of continuous automorphisms of $\mathcal{O}$. Such an automorphism is completely determined by an action on the generator $z$. We can identify $\operatorname{Aut}\mathcal{O}$ with the set of series of the form $a_1z+ a_2z^2+\dots $ with $a_1\in \mathbb{C}^*$, where the group law is given by the usual composition of formal power series. Also consider the subgroup $\operatorname{Aut}_{+}\mathcal{O}$ consisting of elements of the form $z+a_2z^2+\dots$. The following Lemma is easy to prove (cf Lemma 6.2.1 in \cite{BF}):
\begin{lemma}\label{aut}
Let us denote the Lie algebras  $z\mathbb{C}[[z]]\partial_{z}$ by $\operatorname{Der}_{0}\mathcal{O}$ and  $z^2\mathbb{C}[[z]]\partial_{z}$ by $\operatorname{Der}_{+}\mathcal{O}$, then 
\begin{enumerate}
\item $\operatorname{Aut}\mathcal{O}$ is a semi-direct product of the multiplicative group $\mathbb{G}_m$ and $\operatorname{Aut}_{+}\mathcal{O}$.
\item $\operatorname{Aut}_{+}\mathcal{O}$ has the structure of a prounipotent proalgebraic group.
\item $\operatorname{Lie}(\operatorname{Aut}\mathcal{O})=\operatorname{Der}_0\mathcal{O}$, $\operatorname{Lie}(\operatorname{Aut}_{+}\mathcal{O})=\operatorname{Der}_{+}\mathcal{O}$ and the exponential map $exp:\operatorname{Der}_{+}\mathcal{O} \rightarrow \operatorname{Lie}(\operatorname{Aut}_{+}\mathcal{O}$ is an isomorphism. 
\end{enumerate}

\end{lemma}
As pointed out in \cite{BF}, one expects the derivations of a ring to form the Lie algebra of its group automorphisms, but this is not the case here since the ring $\operatorname{Der} \mathcal{O}:=\mathbb{C}[[z]]\partial_z$ is bigger than $\operatorname{Der}_0\mathcal{O}$. So it appears that we may not be able to exponentiate the action of the infinitesimal shift$\partial_z$. The anomaly is resolved by considering the fact that $\operatorname{Aut}\mathcal{O}$ is a semi-direct product of $\mathbb{G}_m$ and $\operatorname{Aut}_{+}\mathcal{O}$.  

Suppose we are given an action of $\operatorname{Lie} \mathbb{G}_m \simeq \mathbb{C}. z\partial_z$ on a vector space $V$. Then this representation can be exponentiated to a representation of the multiplicative group $\mathbb{G}_m$ if an only if, the action of $z\partial_z$ is diagonalizable and its eigenvalues are integers. Then, we can define a $\mathbb{C}^*$ action on $V$ by letting $a \in \mathbb{C}^*$ act by $a^n$ on the eigenvectors of $z\partial_z$ with eigenvalue $n$. 

Next Lemma \ref{aut} tells us that the exponential map from $\operatorname{Der}_{+}\mathcal{O}$ to $\operatorname{Aut}_{+}\mathcal{O}$ is a isomorphism. We can exponentiate a $\operatorname{Der}_{+}\mathcal{O}$ action on a vector space space $V$ to $\operatorname{Aut}_{+}\mathcal{O}$, if it is locally nilpotent, i.e., for any $v\in V$ and $x \in \operatorname{Der}_{+}\mathcal{O}, x^N.v=0$ for $N$ sufficiently large. This indeed guarantees that $\operatorname{exp} x$ is a finite sum. We summarize our discussion as follows:

\begin{proposition}\label{exponential}An action of $\operatorname{Der}_{0}\mathcal{O}$ on a module $V$ can be exponentiated to an action of $\operatorname{Aut}\mathcal{O}$ if the following are satisfied:
\begin{enumerate}
\item The action of $z\partial_z$ is diagonalizable with integral eigenvalues.
\item The action of $\operatorname{Der}_{+}\mathcal{O}$ is locally nilpontent. 
\end{enumerate}

\end{proposition}

\subsection{Virasoro action with integral eigen values}
We recall the Segal-Sugawara action of the Virasoro algebra on integrable highest weight modules $\mathcal{H}_{{\lambda}}(\frg)$. Let us denote $X\otimes z^n$ as $X(n)$, where $z$ is a variable. The normal ordering $:{}:$ is defined by 
\[:X(n)Y(m):=   \left\{
\begin{array}{ll}
      X(n)Y(m)& n<m, \\
      \frac{1}{2}(X(n)Y(m)+Y(m)X(n)) & n=m,\\
      Y(m)X(n) & n>m. \\        
\end{array} 
\right. \]
We will now use the normal ordering $: {}:$ defined above to give the Segal-Sugawara action. Let $\{J^1,\dots, J^{\dim{\frg}}\}$ be an orthonormal basis of $\frg$ with respect to the normalized Cartan Killing form $( ,)_{\frg}$ and $g^*$ denote the dual Coxter number. Then the polynomial vector field $-z^{n+1}\partial_{z}$ acts on $\mathcal{H}_{\lambda}(\frg)$ by the following operator.
$$L_n:=\frac{1}{2(g^*+\ell)} \sum_{m \in \mathbb{Z}}\sum_{a=1}^{\dim \frg}: J^a(m)J^a(n-m):$$

By the above discussion, it follows that the Lie algebra $\operatorname{Der}_0\mathcal{O}$ acts on the $\widehat{\frg}$-module $\mathcal{H}_{\lambda}(\frg)$ by the Segal-Sugawara action. We refer the reader to Section 3.2 in \cite{KW} for more details. Moreover the action of $z^2\mathbb{C}[[z]]\partial_{z}$ is locally nilpotent. The eigenvalues of the operator $L_0=-z\partial_z$ on $\mathcal{H}_{\lambda}(\frg)$ are of the form $\Delta_{\lambda}(\frg,\ell)+i$ which may not integral. Hence the action of $\operatorname{Der}_0 \mathcal{O}$ may not be exponentiated to an action of the group $\operatorname{Aut}\mathcal{O}$. 

We consider the vector space $\mathcal{H}_{\lambda}(\frg)\otimes \mathbb{C} dz^{\Delta_{\lambda}(\frg,\ell)}$. This space is a natural $\operatorname{Der}_0\mathcal{O}$-module where $z^2\mathbb{C}[[z]]\partial_z$ acts locally nilpotently and the eigenvalues of $L_0$ are integral. By Proposition \ref{exponential} (cf. Section 6.3 of \cite{BF}), we get an action of $\operatorname{Aut}\mathcal{O}$ on $\mathcal{H}_{\lambda}(\frg)\otimes \mathbb{C} dz^{\Delta_{\lambda}(\frg,\ell)}$.

\subsection{Torsors and Twists}Let $G$ be a group and $S$ be a non empty set with a simply transitive right action on $G$. Let $V$ be any $G$-module, we can define the $S$-twist of $V$ as the set $$V_S=S\times_{G}V=S\times V/\{(s.g, v) \sim (s,gv)\}.$$ Since the action of $G$ on $S$ is simple transitive, the choice of any point $x \in S$ allows us to identify $G$ with $S$. Further we can identify $V$ with $V_S$ by sending $v \rightarrow (x,v)$. This identification depends on the choice of $x$, however the vector space structure on $V_S$ induced by the above identification is independent of $x$. Any element of $V_S$ can be uniquely written as $(x,v)$, where $v \in V$. We refer the reader to Section 6.4.6 of \cite{BF} for more details. 

Let $C$ be an algebraic curve with at most nodal singularities and $x$ be a smooth point of $C$. Let $\mathcal{O}_x$ be the completed local ring $C$ at a point $x$. Recall that a choice of a formal coordinate at $x$ is same the choice of an isomorphism of $\mathcal{O}_x\simeq \mathbb{C}[[z]]$. Let $\operatorname{Aut}_x$ be the set of all coordinates on $\operatorname{Spec}\mathcal{O}_x$. It is clear that $\operatorname{Aut}_x$ comes with a simply transitive right action of $\operatorname{Aut}\mathcal{O},$ where $\mathcal{O}=\mathbb{C}[[z]]$. Since $\operatorname{Aut}\mathcal{O}$ acts on $\mathcal{H}_{\lambda}(\frg)\otimes \mathbb{C}dz^{\Delta_{\lambda}(\frg,\ell)}$, we can form the following $\operatorname{Aut}_x$-twist:
$$\mathcal{H}_{\lambda}(C/x):=\operatorname{Aut}_x\times_{\operatorname{Aut}\mathcal{O}}(\mathcal{H}_{\lambda}(\frg)\otimes \mathbb{C}dz^{\Delta_{\lambda}(\frg,\ell)}).$$
 Since the action of $\operatorname{Aut}\mathcal{O}$ on $\operatorname{Aut}_x$ is simply transitive, any two formal coordinates of $x$ are relation by an unique element of $\operatorname{Aut}\mathcal{O}$. Hence, by construction there is no preferred choice of coordinates to give a vector space structure on $\mathcal{H}_{{\lambda}}(C/x)$. Moreover if we choose a formal coordinate around the around $x$, we can identify (non-canonically) $\mathcal{H}_{{\lambda}}(C/x)$ with $\mathcal{H}_{{\lambda}}$.  
\subsubsection{Action of $\widehat{\frg}_x$}
Let $\widehat{\frg}_x$ be the coordinate-free affine Kac-Moody algebra constructed in Section \ref{coordinatefreekacmoody}. The vector space $\mathcal{H}_{\lambda}(C/x)$ is naturally a $\widehat{\frg}_x$ module. For completeness, we describe the module structure. As discussed before if $z$ is a formal neighborhood of the point $x$, then any element of $\mathcal{H}_{\lambda}(C/x)$ can be uniquely written as $(z, v)$. Using the formal coordinate, we have a isomorphism between $\widehat{\frg}_x$ and $\widehat{\frg}$. Using the action of $\widehat{\frg}$ on $\mathcal{H}_{\vec{\lambda}}$, we can define an action of $\widehat{\frg}_x$ on $\mathcal{H}_{{\lambda}}(C/x)$. A prori, it appears that the action is dependent on the choice of formal coordinates but it is independent of the choice. This can be shown as follows:

First observe that any automorphism of $h \in \operatorname{Aut}\mathcal{O}$ can be thought of as a power series $z \rightarrow h(z)=a_1z+a_2z^2+\dots$, where $a_1\neq 0$. Given $h$, we can find complex numbers $v_i$'s such that (cf 6.3.1 in \cite{BF})
$$h(z)=\exp\bigg( \sum_{i>0}v_iz^{i+1}\partial_z\bigg)v_0^{z\partial_z}.z,$$ where $v_0^{z\partial_z}.z=v_0z$ and $v_0=a_1$. Let us denote the action of $\operatorname{Aut}(\mathcal{O})$ on $\mathcal{H}_{\lambda}(\frg)\otimes \mathbb{C}dz^{\Delta_{\lambda}(\frg,\ell)}$ by $R$. We know that $(z,v)$ and $(h(z),R(h)^{-1}v)$ denote the same element of $\mathcal{H}_{\lambda}(C/x)$. To show that the action of $\widehat{\frg}_x$ is well defined, it enough to show that as operators: $$R(h)X\otimes f((z)) R(h)^{-1}=X\otimes f(h(z)), \ \mbox{ for all $f(z) \in \mathbb{C}((z))$}.$$
This follows from using Theorem 3.12(1) in \cite{TUY} applied to $\sum_{i>0}v_iz^{i+1}\partial_z$ and the fact that $[z\partial_z,X(m)]=mX(m)$. This completes the description of the action of $\widehat{\frg}_x$ on $\mathcal{H}_{\lambda}(C/x)$.
\begin{lemma}\label{alternate}
Let $C$ be a nodal curve and $x$ be a smooth point of $C$, then we have a canonical identification,
$$\mathbb{H}_{\lambda}(C/x)\simeq \mathcal{H}_{\lambda}(C/x).$$

\end{lemma}
\begin{proof}
We choose a formal coordinate $z$ around the point $x$. This gives an isomorphism of $\widehat{\frg}_x$ with $\widehat{\frg}$ and isomorphism $\phi_1:\mathcal{H}_{\lambda}(C/x)\simeq \mathcal{H}_{\lambda}$ and an isomorphism $\phi_2: \mathbb{H}_{\lambda}(C/x) \simeq \mathcal{H}_{{\lambda}}$. We consider the map $\phi_2{^{-1}}\circ \phi_1$. 

 In Section \ref{coordinatefreekacmoody}, the $\widehat{\frg}_x$-module $\mathbb{H}_{\lambda}(C/x)$ (respectively $\mathcal{H}_{\lambda}(\frg)$) is constructed as a quotient of Verma module $\operatorname{Ind}^{\widehat{\frg}_x}_{\widehat{\mathfrak{p}}_x}V_{\lambda}$ (respectively $\operatorname{Ind}^{\widehat{\frg}}_{\frg \otimes \mathbb{C}[[z]]\oplus \mathbb{C}c}V_{\lambda}$) which is same as $\mathcal{U}(\widehat{\mathfrak{g}}_x)\otimes_{\mathcal{U}(\widehat{\mathfrak{p}}_x)}V_{\lambda}$. Now there are canonical identifications of the form  $\operatorname{Aut}_x\times_{{\operatorname{Aut}}\mathcal{O}} \mathbb{C}[[z]]\simeq \widehat{\mathcal{O}}_{x}$ and $\operatorname{Aut}_x\times_{\operatorname{Aut}\mathcal{O}} \mathbb{C}((z))\simeq{{K}}_{x}$. These give canonical identifications $\widehat{\frg}_x$ with $\operatorname{Aut}_x\times_{\operatorname{Aut}\mathcal{O}}\big(\frg \otimes\mathbb{C}((z))\oplus \mathbb{C}.c\big)$ (similarly for $\widehat{\mathfrak{p}}_x$) and their corresponding universal enveloping algebras. Using this along with the construction of Verma modules tell us that the map $\phi_2^{-1}\circ{\phi_1}$ is independent of the choice of coordinates.

 By definition of the action of $\widehat{\frg}_x$ on $\mathbb{H}_{\lambda}(C/x)$ and $\mathcal{H}_{\lambda}(C/x)$, we see that the isomorphisms satisfy the intertwining property $\phi_i \circ \widehat{\frg}_x=\widehat{\frg}\circ \phi_i$ for $i=\{1,2\}$. Hence the composition $\phi_2^{-1}\circ \phi_1$ is $\widehat{\frg}_x$ equivariant. Since $\phi_2^{-1}\circ\phi_1$ is non-zero, we are done by Schur's Lemma.

\end{proof}

\section{Preliminaries on $\mon$ and the Chern class formula of Fakhruddin} In this section we set up notation and conventions for various important divisors on $\mon$. We also recall the Chern class formula from \cite{F} and rewrite it with psi-classes on $\mon$ using a result of \cite{FG}.

\subsection{Divisors on $\mon$}The moduli space $\mon$ is stratified. A {\em vital codimension $k$ stratum} is an irreducible component of the locus of curves with at least $k$ nodes. The {\em boundary divisors} on $\mon$ are composed of vital codimension $1$-strata labeled by $D_{A,A^c}$, where $A\cup A^c=\{1,\dotsm n\}$ and $|A|,|A^c|\geq 2$. We have the following identification $D_{A,A^c}=D_{A^c,A}$. We denote by $[D_{A,A^c}]$ the linear equivalence class of $D_{A,A^c}$ in $\operatorname{Pic}(\mon)$. 

Let $\overline{\mathcal{M}}_{g,n}$ denote the Deligne-Mumford compactification of the moduli stack of genus $g$ curves with $n$-marked points. The $i$-th psi class $\psi_i$ on $\overline{\mathcal{M}}_{g,n}$ is defined to be the first Chern class of $\mathbb{L}_i$, where $\mathbb{L}_i$ is the line bundle on $\overline{\mathcal{M}}_{g,n}$ whose fiber over the point $(C,p_1\dots,p_n)$ is the cotangent space $T^*_{p_i}(C)$. We recall the following Lemma from \cite{FG}.

\begin{lemma}\label{fg}
The psi classes $\psi_i$ for $1\leq i \leq n$, have the following expression in terms of boundary divisors in $\operatorname{Pic}(\mon)$. 
$$\psi_i=\sum_{\substack{A\subset \{1,2,\dots, n\} \\ 2 \leq |A|\leq n-2 \\ i \in A  }} \frac{(n-|A|)(n-|A|-1)}{(n-1)(n-2)}[D_{A,A^c}].$$
\end{lemma}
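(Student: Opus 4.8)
The plan is to deduce this symmetric identity by symmetrizing the classical \emph{asymmetric} expression of a psi class in terms of boundary divisors. The starting point is the standard fact that, for the marked point $i$ and for any choice of two further distinct indices $a,b\in\{1,\dots,n\}\setminus\{i\}$, one has in $\operatorname{Pic}(\mon)$
\begin{equation*}
\psi_i=\sum_{\substack{A\subseteq\{1,\dots,n\}\\ i\in A,\ a,b\in A^c}}[D_{A,A^c}].
\end{equation*}
Here the condition $a,b\in A^c$ already forces $|A^c|\ge 2$, and only subsets with $|A|\ge 2$ give a genuine boundary class, so the effective range is $2\le|A|\le n-2$. I would first recall or re-derive this identity: its base case is the equality of point classes $\psi_i=[D_{\{i,c\},\{a,b\}}]$ on $\overline{M}_{0,4}\cong\mathbb{P}^1$, and the general form is standard, obtained by pulling back the relation on $\overline{M}_{0,4}$ along the forgetful morphism that remembers only the four points $i,a,b,c$ and tracking the boundary corrections introduced by forgetting the other points.

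Granting this identity for \emph{every} admissible pair $\{a,b\}$, the main step is to sum over all such pairs. There are exactly $\binom{n-1}{2}$ unordered pairs $\{a,b\}\subseteq\{1,\dots,n\}\setminus\{i\}$, so adding the corresponding copies of the asymmetric identity gives
\begin{equation*}
\binom{n-1}{2}\,\psi_i=\sum_{\{a,b\}}\ \sum_{\substack{A:\ i\in A\\ a,b\in A^c}}[D_{A,A^c}].
\end{equation*}
Interchanging the order of summation, a fixed subset $A$ with $i\in A$ contributes the class $[D_{A,A^c}]$ once for each pair $\{a,b\}\subseteq A^c$, that is, exactly $\binom{|A^c|}{2}=\binom{n-|A|}{2}$ times. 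Hence
\begin{equation*}
\binom{n-1}{2}\,\psi_i=\sum_{\substack{A:\ i\in A\\ 2\le|A|\le n-2}}\binom{n-|A|}{2}\,[D_{A,A^c}],
\end{equation*}
and dividing by $\binom{n-1}{2}=\tfrac{(n-1)(n-2)}{2}$, together with $\binom{n-|A|}{2}=\tfrac{(n-|A|)(n-|A|-1)}{2}$, produces exactly the coefficient $\frac{(n-|A|)(n-|A|-1)}{(n-1)(n-2)}$ claimed in the Lemma. The upper bound $|A|\le n-2$ appears automatically, since $\binom{n-|A|}{2}=0$ when $|A|=n-1$.

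The only genuine input is the asymmetric identity; everything else is bookkeeping. I therefore expect the main obstacle to be a clean justification of that identity, in particular excluding the unstable would-be term $A=\{i\}$ correctly and applying the forgetful-map comparison with the right boundary correction. Once the identity is in hand for a single reference pair, its independence of the choice of $\{a,b\}$ is automatic (every right-hand side equals $\psi_i$), and it is precisely this that legitimizes the averaging and converts the asymmetric expression into the manifestly symmetric formula of the Lemma.
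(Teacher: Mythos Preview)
Your argument is correct: the averaging of the standard asymmetric relation $\psi_i=\sum_{i\in A,\ a,b\in A^c}[D_{A,A^c}]$ over all $\binom{n-1}{2}$ pairs $\{a,b\}$ is exactly how one obtains this symmetric expression, and your bookkeeping is accurate. The paper itself gives no proof of this lemma; it is simply quoted from \cite{FG}, so there is nothing further to compare against.
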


\subsection{Chern classes of conformal blocks} N. Fakhruddin's gave the following formula for the first Chern classes of conformal block bundles. We refer the reader to \cite{F} for more details:
\begin{proposition}
Let $\frg$ be a simple Lie algebra, $\ell$ a positive integer and consider an $n$-tuple $\vec{\lambda}=(\lambda_1,\dots, \lambda_n) \in P_{\ell}(\frg)^n$. Then 
\begin{eqnarray*}
&& c_1(\mathbb{V}_{\vec{\lambda}}(\frg,\ell))=\\
&&\ \sum_{i=2}^{\lfloor \frac{n}{2} \rfloor}\epsilon_i  \sum_{\substack{A \subset \{1,2,\dots,n \} \\ |A|=i}}\bigg( \frac{\operatorname{rk}\mathbb{V}_{\vec{\lambda}}(\frg,\ell)}{(n-1)(n-2)} \bigg( (n-i)(n-i-1)\sum_{a\in A}\Delta_{\lambda_a}(\frg, \ell)+ \\
&&\  \ i(i-1)\sum_{a'\in A^c} \Delta_{\lambda_{a'}}(\frg, \ell) \bigg) -\bigg( \sum_{\lambda \in P_{\ell}(\frg)}\Delta_{\lambda}(\frg,\ell).\operatorname{rk}\mathbb{V}_{\vec{\lambda}_A,\lambda}(\frg_1,\ell_1).\operatorname{rk}\mathbb{V}_{{\vec{\lambda}}_{A^c},\lambda^*}(\frg,\ell)\bigg) \bigg) .[D_{A,A^c}],
\end{eqnarray*}
where $[D_{A,A^c}]$ denotes the class of the boundary divisor corresponding to the partition $A\cup A^c=\{1,\dots, n\}$, $\epsilon_i=\frac{1}{2}$ if $i=n/2$ and one otherwise.
\end{proposition}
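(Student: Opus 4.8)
The displayed statement is Fakhruddin's theorem (\cite{F}), so the plan is to reconstruct its proof through the projectively flat connection carried by the conformal block bundle together with the factorization theorem, and then to recast the answer in the purely boundary form above. First I would restrict $\mathbb{V}_{\vec{\lambda}}(\frg,\ell)$ to the interior $M_{0,n}$ and invoke the TUY connection (\cite{TUY}), which is projectively flat; consequently $c_1(\mathbb{V}_{\vec{\lambda}})$ is supported on the boundary up to the contribution of the central-charge (scalar-curvature) term. Since $\Pic(\mon)_{\mathbb{Q}}$ is spanned by the boundary classes $[D_{A,A^c}]$, it then suffices to determine the coefficient of each such class.

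Next I would extend the connection to a logarithmic connection on the nodal completion, with regular singularities along every boundary divisor $D_{A,A^c}$, so that the coefficient of $[D_{A,A^c}]$ in $c_1(\mathbb{V}_{\vec{\lambda}})$ is read off from $-\operatorname{tr}\big(\Res_{D_{A,A^c}}\nabla\big)$ together with a marked-point/central-charge correction. The residue is computed by factorization (\cite{TUY}): in a neighborhood of $D_{A,A^c}$ the fiber decomposes as $\bigoplus_{\lambda\in P_\ell(\frg)}\mathbb{V}_{\vec{\lambda}_A,\lambda}(\frg,\ell)\otimes\mathbb{V}_{\vec{\lambda}_{A^c},\lambda^*}(\frg,\ell)$, and the residue acts on the summand labelled by $\lambda$ through the energy operator $L_0$ along the smoothing parameter, with eigenvalue the trace anomaly $\Delta_\lambda(\frg,\ell)$. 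Taking the trace therefore produces $\sum_{\lambda}\Delta_\lambda(\frg,\ell)\cdot\operatorname{rk}\mathbb{V}_{\vec{\lambda}_A,\lambda}\cdot\operatorname{rk}\mathbb{V}_{\vec{\lambda}_{A^c},\lambda^*}$, which is exactly the second group of terms in the displayed formula.

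The remaining ingredient is the contribution of the $n$ marked sections and the scalar curvature of the projective connection. Here I would show that these assemble into $\operatorname{rk}\mathbb{V}_{\vec{\lambda}}(\frg,\ell)\sum_{i=1}^n\Delta_{\lambda_i}(\frg,\ell)\,\psi_i$, the weight $\Delta_{\lambda_i}$ being the $L_0$-eigenvalue attached to the insertion at the $i$-th point and $\psi_i$ the Chern class of the relative cotangent line there; in genus zero the purely Hodge part of the central charge vanishes, leaving only this $\psi$-expression. This yields Fakhruddin's formula in its psi-class form, namely $c_1(\mathbb{V}_{\vec{\lambda}})=\operatorname{rk}\mathbb{V}_{\vec{\lambda}}\sum_i\Delta_{\lambda_i}\psi_i-\sum_{A}\big(\sum_\lambda\Delta_\lambda\operatorname{rk}\mathbb{V}_{\vec{\lambda}_A,\lambda}\operatorname{rk}\mathbb{V}_{\vec{\lambda}_{A^c},\lambda^*}\big)[D_{A,A^c}]$.

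Finally I would substitute Lemma \ref{fg} to rewrite each $\psi_i$ in boundary classes and collect the coefficient of a fixed $[D_{A,A^c}]$ with $|A|=i$: the indices $a\in A$ contribute the factor $(n-i)(n-i-1)$ and the indices $a'\in A^c$ contribute $i(i-1)$, both divided by $(n-1)(n-2)$, which is precisely the first group of terms above, while the factors $\epsilon_i$ and the range $2\le i\le\lfloor n/2\rfloor$ merely account for the identification $D_{A,A^c}=D_{A^c,A}$. I expect the genuine difficulty to lie in the second and third steps, that is, in pinning down the residue eigenvalues and the normalization of the central-charge term from the merely projective flatness of the TUY connection, whereas the concluding passage via Lemma \ref{fg} is a routine bookkeeping of coefficients.
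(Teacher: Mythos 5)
The paper gives no proof of this proposition at all: it is quoted from Fakhruddin \cite{F} with a pointer to that paper, so the only meaningful comparison is with Fakhruddin's own argument, and your route differs from it in an instructive way. Fakhruddin does not extend a logarithmic connection over all of $\mon$ and take residues; he first computes the degree of $\det\mathbb{V}_{\vec{\lambda}}(\frg,\ell)$ on $\overline{\operatorname{M}}_{0,4}$ (this is the only place local exponents of the KZ-type equation, i.e.\ the trace anomalies $\Delta_{\mu}(\frg,\ell)$, are used, and on a one-dimensional base the projective ambiguity of the connection is harmless), and then pins down the class for general $n$ by functoriality of conformal blocks under the attaching and boundary maps --- equivalently by intersecting with F-curves --- using the structure of $\operatorname{Pic}(\mon)$. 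That reduction is precisely what lets him avoid the two points you yourself flag as the difficulty.

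Those two points are genuine gaps as your plan stands, not just technical chores. First, there is no honest global connection on $\mathbb{V}_{\vec{\lambda}}(\frg,\ell)$ over $\mon$ to ``extend to a logarithmic connection'': the TUY connection is defined only after choices of formal coordinates at the marked points and at the nodes, and globally exists only projectively; the obstruction to globalizing these choices is exactly what produces the term $\operatorname{rk}\mathbb{V}_{\vec{\lambda}}(\frg,\ell)\sum_{j}\Delta_{\lambda_j}(\frg,\ell)\,\psi_j$, so the residue formula $c_1=-\operatorname{tr}\bigl(\Res\nabla\bigr)$ cannot be invoked as stated. To make the residues along $D_{A,A^c}$ and the $\psi$-normalization meaningful simultaneously one needs the twisted logarithmic $\mathcal{D}$-module (Atiyah-algebra) structure from \cite{TUY}, or a coordinate-free formulation as in Tsuchimoto \cite{T} --- asserting that the corrections ``assemble into'' the $\psi$-expression is the theorem, not a step. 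Second, ``determine the coefficient of each boundary class'' is not well posed: the classes $[D_{A,A^c}]$ generate $\operatorname{Pic}(\mon)$ but satisfy Keel's relations (for $n=4$ all three boundary classes coincide), so coefficients are not unique; a residue computation can only produce one representative identity, which suffices, but the uniqueness language conceals the normalization issue that Fakhruddin's $\overline{\operatorname{M}}_{0,4}$-plus-functoriality argument is built to settle. Your concluding conversion between the $\psi$-class form and the boundary form via Lemma \ref{fg} is correct and is the same bookkeeping the paper itself performs in Proposition \ref{rewrite}, only in the opposite direction; note also that the statement you copied inherits a typo from the paper: the first rank factor in the boundary sum should read $\operatorname{rk}\mathbb{V}_{\vec{\lambda}_A,\lambda}(\frg,\ell)$, not $(\frg_1,\ell_1)$.
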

We rewrite Fakhruddin's formula using psi classes and Lemma \ref{fg}.
\begin{proposition}\label{rewrite}
\begin{eqnarray*}
c_1(\mathbb{V}_{\vec{\lambda}}(\frg,\ell))&=& \operatorname{rk}\mathbb{V}_{\vec{\lambda}}(\frg,\ell)  \bigg( \sum_{j=1}^n \Delta_{\lambda_j}(\frg,\ell)\psi_j\bigg)\\
&& -\sum_{i=2}^{\lfloor \frac{n}{2} \rfloor}\epsilon_i  \sum_{\substack{A \subset \{1,2,\dots,n \} \\ |A|=i}}\left\{\sum_{\lambda \in P_{\ell}(\frg)}\Delta_{\lambda}(\frg,\ell).\operatorname{rk}\mathbb{V}_{\vec{\lambda}_A,\lambda}(\frg,\ell).\operatorname{rk}\mathbb{V}_{{\vec{\lambda}}_{A^c},\lambda^*}(\frg,\ell)\right\}[D_{A,A^c}],
\end{eqnarray*}
 where $\psi_j$ is the $j$-th psi class, $[D_{A,A^c}]$ denotes the class of the boundary divisor $D_{A,A^c}$, $\epsilon_i=\frac{1}{2}$ if $i=n/2$ and one otherwise.
\end{proposition}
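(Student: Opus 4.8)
The plan is to derive the stated identity directly from Fakhruddin's formula (the preceding Proposition) by substituting the expansion of the psi classes supplied by Lemma \ref{fg}. First I would observe that the second summand of Fakhruddin's formula, namely
$$\sum_{i=2}^{\lfloor \frac{n}{2}\rfloor}\epsilon_i\sum_{\substack{A\subset\{1,\dots,n\}\\|A|=i}}\left(\sum_{\lambda\in P_\ell(\frg)}\Delta_\lambda(\frg,\ell)\cdot\operatorname{rk}\mathbb{V}_{\vec{\lambda}_A,\lambda}(\frg,\ell)\cdot\operatorname{rk}\mathbb{V}_{\vec{\lambda}_{A^c},\lambda^*}(\frg,\ell)\right)[D_{A,A^c}],$$
is reproduced verbatim in the claimed formula, so these contributions agree with nothing to prove. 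Consequently it suffices to show that the first summand of Fakhruddin's formula equals $\operatorname{rk}\mathbb{V}_{\vec{\lambda}}(\frg,\ell)\sum_{j=1}^n\Delta_{\lambda_j}(\frg,\ell)\psi_j$. Since $\operatorname{rk}\mathbb{V}_{\vec{\lambda}}(\frg,\ell)$ occurs as a common factor on both sides, I would cancel it and reduce to a purely combinatorial identity between divisor classes on $\mon$.

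To prove that identity I would compute the coefficient of a fixed boundary class $[D_{A,A^c}]$ on each side. On the psi-class side I expand every $\psi_j$ using Lemma \ref{fg}. For a fixed partition $\{A,A^c\}$ each index $j$ lies in exactly one block, and the lemma shows that $\psi_j$ contributes to $[D_{A,A^c}]$ the coefficient $\frac{(n-|A|)(n-|A|-1)}{(n-1)(n-2)}$ when $j\in A$ (taking the subset in the lemma to be $A$ itself) and the coefficient $\frac{(n-|A^c|)(n-|A^c|-1)}{(n-1)(n-2)}$ when $j\in A^c$ (taking the subset to be $A^c$, using $D_{A,A^c}=D_{A^c,A}$). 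Writing $i=|A|$, so that $|A^c|=n-i$, these become $\frac{(n-i)(n-i-1)}{(n-1)(n-2)}$ and $\frac{i(i-1)}{(n-1)(n-2)}$ respectively. Summing $\Delta_{\lambda_j}(\frg,\ell)$ against these contributions yields the coefficient
$$\frac{1}{(n-1)(n-2)}\left((n-i)(n-i-1)\sum_{a\in A}\Delta_{\lambda_a}(\frg,\ell)+i(i-1)\sum_{a'\in A^c}\Delta_{\lambda_{a'}}(\frg,\ell)\right),$$
which is exactly the quantity multiplying $\epsilon_i[D_{A,A^c}]$ in the listing $A'=A$ of Fakhruddin's first summand.

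The only point requiring care, and the one I expect to be the main obstacle, is the bookkeeping forced by the identification $D_{A,A^c}=D_{A^c,A}$ together with the normalization factor $\epsilon_i$. In Fakhruddin's formula the outer index runs only up to $\lfloor n/2\rfloor$, so when $i<n/2$ each partition is listed once and $\epsilon_i=1$, matching the coefficient computed above. When $n$ is even and $i=n/2$ the inner sum $\sum_{|A|=i}$ lists each partition twice (once as $A$ and once as $A^c$); here I would check that $(n-i)(n-i-1)=i(i-1)$, so both listings contribute the same quantity $\frac{i(i-1)}{(n-1)(n-2)}\sum_{j=1}^n\Delta_{\lambda_j}(\frg,\ell)$, and the factor $\epsilon_{n/2}=\frac{1}{2}$ precisely corrects the double-counting so that the total agrees with the psi-class coefficient computed above. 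Verifying this boundary case, and confirming that the expansion of $\psi_j$ respects the same symmetrization, completes the comparison of coefficients of every $[D_{A,A^c}]$ and hence establishes the Proposition.
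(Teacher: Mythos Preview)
Your proposal is correct and follows precisely the approach the paper indicates: the paper simply states that Proposition~\ref{rewrite} is obtained by rewriting Fakhruddin's formula using Lemma~\ref{fg}, and your argument carries out exactly this substitution, including the verification of the $\epsilon_{n/2}$ bookkeeping that the paper leaves implicit.
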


\section{Geometric formulation of Theorem \ref{main}} In this section we give a coordinate free description of rank-level duality map on $\overline{\mathcal{M}}_{g,n}$. Using a result of A. Boysal and C. Pauly about compatibility of (see \cite{BP}) factorization with rank-level duality we formulate Theorem \ref{main}. 
\begin{remark}
Though the geometric set up of Theorem \ref{main} works over $\overline{\mathcal{M}}_{g,n}$, but Theorem \ref{main} is only a genus zero result. This is because on positive genus curves the only known rank-level duality for conformal blocks is for the conformal embedding $\mathfrak{g}_2\oplus \mathfrak{f}_4 \rightarrow \mathfrak{e}_8$. In this case, the relations analogous to Theorem \ref{main} has been worked out by the author in \cite{M3}. Strange duality results of \cite{A, BP, Bel1, MO} which holds for curves of arbitrary genus does not fit in the format of Theorem \ref{main} as Theorem \ref{main} is a statement purely about conformal blocks (cf Remark \ref{higher}).
\end{remark}
\subsection{Coordinate free description of rank-level duality}\label{geometry}
Conformal embeddings give rise to maps of conformal blocks associated to stable $n$-pointed curves of genus $g$ with chosen coordinates. These maps are known as rank-level duality maps. We refer the reader to \cite{M1} for more details. We are interested in studying the conformal block divisors on $\mon$, hence we give a coordinate free description of rank-level duality. 

Consider a conformal embedding $\frg_1\oplus \frg_2 \rightarrow \frg$ of Dynkin multi-index $(\ell_1,\ell_2)$. Let $\vec{\Lambda}=(\Lambda_1,\dots, \Lambda_n) \in P_1(\frg)^n$ and  $\vec{\lambda}=(\lambda_1,\dots, \lambda_n)$, $\vec{\mu}=(\mu_1,\dots,\mu_n)$ be such that $(\lambda_i,\mu_i)\in \widetilde{B}(\Lambda_i)$ for $1\leq i \leq n$. We prove the following: 
\begin{proposition}\label{coordinate} There exists a map $\alpha$ between the following vector bundles on $\overline{\mathcal{M}}_{g,n}$ such that after choosing coordinates this map coincides with the rank-level duality map 
$$\alpha: \mathbb{V}_{\vec{\lambda}}(\frg_1,\ell_1)\otimes \mathbb{V}_{\vec{\mu}}(\frg_2,\ell_2)\otimes (\bigotimes_{i=1}^n {\mathbb{L}_i}^{-n_{\lambda_i,\mu_i}^{\Lambda_i}}) \rightarrow \mathbb{V}_{\vec{\Lambda}}(\frg,1),$$ where $n_{\lambda_i,\mu_i}^{\Lambda_i}$'s are difference of trace anomalies as defined in Section \ref{conformal}.
\end{proposition}
\begin{proof} Consider a family of curves $\mathcal{F}=(\pi: \mathcal{C}\rightarrow \mathcal{B};s_1,\dots,s_n;\xi_1,\dots, \xi_n)$ of $n$-pointed nodal curves of genus $g$. Let $s_1,\dots,s_n$ be $n$ sections of $\pi$ with coordinates $\xi_1,\dots, \xi_n$. Since for all $1\leq i\leq n$,  $(\lambda_i,\mu_i)\in \widetilde{B}(\Lambda_i)$, there is a map between 
\begin{eqnarray}\label{branching}
\mathcal{H}_{\vec{\lambda}}\otimes \mathcal{H}_{\vec{\mu}}\otimes \mathcal{O}_{\mathcal{B}}\rightarrow \mathcal{H}_{\vec{\Lambda}}\otimes \mathcal{O}_{\mathcal{B}},
\end{eqnarray}
 where $\mathcal{H}_{\vec{\lambda}}=\mathcal{H}_{\lambda_1}\otimes\dots \otimes  \mathcal{H}_{\lambda_n}$, $\mathcal{H}_{\vec{\mu}}=\mathcal{H}_{\mu_1}\otimes \dots \otimes \mathcal{H}_{\mu_n}$ and $\mathcal{H}_{\vec{\Lambda}}=\mathcal{H}_{\Lambda_1}\otimes \dots \otimes \mathcal{H}_{\Lambda_n}$. This gives rise to a rank-level duality map associated to the family $\mathcal{F}$ between the following locally free sheaves:
$$\mathcal{H}_{\vec{\lambda}}\otimes \mathcal{H}_{\vec{\mu}}\otimes \mathcal{O}_{\mathcal{B}}/ (\frg_1\oplus \frg_2) \otimes_{\mathbb{C}}\pi_*(\mathcal{O}_{\mathcal{C}}(*S)) \ \rightarrow \mathcal{H}_{\vec{\Lambda}}\otimes \mathcal{O}_{\mathcal{B}}/\frg \otimes_{\mathbb{C}} \pi_*(\mathcal{O}_{\mathcal{C}}(*S)).$$ 
Since the embedding is conformal, it follows that the map \ref{branching} commutes with the action $L_n^{\frg_1}+L_n^{\frg_2}$ on the left and on the right by $L_n^{\frg}$ for any integer $n$. Thus conformal embedding and Lemma \ref{alternate} tells us that there is a map of the following vector bundles (formal vector bundles) which is independent of the chosen coordinates $\xi_i$ in the family $\mathcal{F}$. 
\begin{eqnarray}
\mathbb{V}_{\vec{\lambda}}(\mathcal{C}/\mathcal{B},\frg_1, \ell_1)\otimes \mathbb{V}_{\vec{\mu}}(\mathcal{C}/\mathcal{B},\frg_2,\ell_2)\otimes \bigotimes_{i=1}^n (s_i^*(\omega_{\mathcal{C}/\mathcal{B}}))^{-(\Delta_{\lambda_i}(\frg_1,\ell_1)+\Delta_{\mu_i}(\frg_2,\ell_2))}\\
 \rightarrow\mathbb{V}_{\vec{\Lambda}}(\mathcal{C}/\mathcal{B},\frg,1)\otimes \bigotimes_{i=1}^n (s_i^*(\omega_{\mathcal{C}/\mathcal{B}}))^{-\Delta_{\Lambda_i}(\frg,1)}, 
\end{eqnarray}
where $\omega_{\mathcal{C}/\mathcal{B}}$ is the sheaf of relative differential on $\mathcal{C}\rightarrow \mathcal{B}$ and, $\Delta_{\lambda_i}(\frg_1,\ell_1), \Delta_{\mu_i}(\frg_2,\ell_2)$ and $\Delta_{\Lambda_i}(\frg,1)$'s are trace anomalies as defined in Section \ref{conformal}. Since the trace anomalies could be fractional, the line bundles considered in the above map are only formal.  

Since our embedding $\frg_1\oplus \frg_2\rightarrow \frg$ is conformal, we know that for all $1\leq i \leq n$,  the difference of trace anomalies $n^{\Lambda_i}_{\lambda_i,\mu_i}=\Delta_{\lambda_i}(\frg_1,\ell_1)+ \Delta_{\mu_i}(\frg_2,\ell_2)-\Delta_{\Lambda_i}(\frg,1)$ are all integers. Hence associated to the family $(\pi:\mathcal{C}\rightarrow \mathcal{B};s_1, \dots, s_n)$, we have the following map of locally free sheaves:

$$\mathbb{V}_{\vec{\lambda}}(\mathcal{C}/\mathcal{B},\frg_1,\ell_1)\otimes \mathbb{V}_{\vec{\mu}}(\mathcal{C}/\mathcal{B},\frg_2,\ell_2)\otimes \bigotimes_{i=1}^n (s_i^*(\omega_{\mathcal{C}/\mathcal{S}}))^{-n^{\Lambda_i}_{\lambda_i,\mu_i}}\rightarrow \mathbb{V}_{\vec{\Lambda}}(\mathcal{C}/\mathcal{B},\frg,1).$$
This completes the proof.

\end{proof}


\subsection{Factorization and compatiblity}

Let $\mathcal{B}=\operatorname{Spec}\mathbb{C}[[t]]$. We consider a family  $\mathcal{F}: \mathcal{C} \rightarrow \mathcal{B}$ of curves of genus $g$ with $n$-marked points and chosen coordinates such that its special fiber $\mathcal{C}_0$ is a curve over $\mathbb{C}$ with exactly one node and its generic fiber $\mathcal{C}_t$ is a smooth curve. We denote the normalization of $\mathcal{C}_0$ as $\widetilde{\mathcal{C}}_0$. We denote by $\mathcal{V}^{\dagger}_{\vec{\lambda}}(\mathcal{F},\frg,\ell )$ the sheaf of conformal blocks associated to the $\mathcal{F}$, Lie algebra $\frg$ and $\vec{\lambda} \in P_{\ell}(\frg)^n$.

In \cite{TUY}, for every $\lambda \in P_{\ell}(\frg)$, there is a $\mathbb{C}[[t]]$-linear map 
$$s_{\lambda}(t):\mathcal{V}^{\dagger}_{\vec{\lambda},\lambda,\lambda^{*}}({\widetilde{\mathcal{C}}}_0,\frg,\ell)\otimes \mathbb{C}[[t]] \rightarrow \mathcal{V}_{\vec{\lambda}}(\mathcal{F},\frg,\ell)$$ 
such that the following map is a isomorphism 
$$\oplus_{\lambda\in P_{\ell}(\frg)}s_{\lambda}(t):\bigoplus_{\lambda \in P_{\ell}(\frg)}\mathcal{V}^{\dagger}_{\vec{\lambda},\lambda,\lambda^{*}}({\widetilde{\mathcal{C}}}_0,\frg,\ell)\otimes \mathbb{C}[[t]] \rightarrow \mathcal{V}_{\vec{\lambda}}(\mathcal{F},\frg,\ell).$$ This is known as factorization of conformal blocks.

Consider a conformal embedding $\mathfrak{s}\rightarrow \frg$. Assume that all level one highest weight integrable modules of $\widehat{\frg}$ decompose with multiplicity one as $\widehat{\mathfrak{s}}$-modules.  Let $\vec{\Lambda}=(\Lambda_1, \dots, \Lambda_n)$ be an $n$ tuple of level one weights of $\frg$ and $\vec{{\lambda}}\in \widetilde{B}(\vec{\Lambda})$. We get a map $\mathcal{H}_{\vec{{\lambda}}}(\mathfrak{s}) \rightarrow \mathcal{H}_{\vec{\Lambda}}(\mathfrak{g}).$ As discussed in Section \ref{geometry}, we get a $\mathbb{C}[[t]]$-linear map 
 $$\alpha(t) :\mathcal{V}_{\vec{\Lambda}}^{\dagger}(\mathcal{F}, \frg,1) \rightarrow \mathcal{V}_{\vec{\lambda}}^{\dagger}(\mathcal{F}, \mathfrak{s},\ell).$$ For ${\lambda} \in \widetilde{B}(\Lambda)$, we denote by $\alpha_{\Lambda, \lambda}$ the rank-level duality map induced from branching as discussed in Section \ref{geometry}.
 $$\mathcal{V}^{\dagger}_{\Lambda, \Lambda^{\dagger},\vec{\Lambda}}(\widetilde{\mathcal{C}}_0, \frg,1) \rightarrow \mathcal{V}^{\dagger}_{\lambda, \lambda^{\dagger}, \vec{\lambda}}(\widetilde{\mathcal{C}}_0, \mathfrak{s},\ell)$$ and the extension of $\alpha_{\Lambda, \lambda}$ to a $\mathbb{C}[[t]]$-linear map is denoted as follows:
 $$\alpha_{\Lambda, \lambda}(t): \mathcal{V}^{\dagger}_{\Lambda, \Lambda^{\dagger},\vec{\Lambda}}(\widetilde{\mathcal{C}}_0, \frg,1) \otimes \mathbb{C}[[t]]\rightarrow \mathcal{V}^{\dagger}_{\lambda, \lambda^{\dagger}, \vec{\lambda}}(\widetilde{\mathcal{C}}_0, \mathfrak{s},\ell)\otimes \mathbb{C}[[t]].$$
The following proposition from \cite{BP} describes how $\alpha(t)$ decomposes under factorization ( see \cite{TUY} for details on factorization). 
\begin{proposition}\label{keydegen}On $\mathcal{B}$, we have 

$$\alpha(t) \circ s_{\Lambda}(t)=\sum_{{\lambda} \in \widetilde{B}(\Lambda)}t^{n^{\Lambda}_{\lambda}}s_{\lambda}(t)\circ\alpha_{\Lambda,\lambda}(t),$$ where $s_{\Lambda}(t)$, $s_{\lambda}(t)$ are sewing maps associated to factorization and $n^{\Lambda}_{\lambda}$ are positive integers given by the formula $ n^{\Lambda}_{\lambda}=\Delta_{\lambda}(\mathfrak{s},\ell)-\Delta_{\Lambda}(\mathfrak{g},1).$
\end{proposition}

\subsection{Behavior on boundary divisors}\label{ranklevel}We now restrict to the case of genus $0$ curves. Consider a boundary divisor $D_{A,A^c}$ given by the partition $A\cup A^c=\{1,\dots, n\}$. As in Section \ref{geometry}, consider a conformal embedding $\frg_1\oplus \frg_2 \rightarrow \frg$ of Dynkin multi-index $(\ell_1,\ell_2)$. Let $\vec{\Lambda} \in P_1(\frg)^n$ be such that rank of $\mathbb{V}_{\vec{\Lambda}}(\frg,1)$ is one.
Hence by factorization of conformal blocks there exists an unique $\Lambda \in P_{1}(\frg)$ such that the following holds:
$$ \text{rk}\mathbb{V}_{\vec{\Lambda}_A, \Lambda}(\frg,1)=1 \ \hspace{1cm} \ \text{rk}\mathbb{V}_{\vec{\Lambda}_{A^c},\Lambda^{*}}(\frg,1)=1.  $$
Consider $\vec{\lambda}=(\lambda_1,\dots,\lambda_n)$, $\vec{\mu}=(\mu_1,\dots,\mu_n)$ such that $(\lambda_i,\mu_i)\in B(\Lambda_i)$ for all $1\leq i \leq n$. 
\begin{definition}
For a partition $A\cup A^c=\{1,\dots,n\}$, we define $b_{A,A^c}$ to be the following non-negative integer:
\begin{eqnarray*}
b_{A,A^c}=\sum_{(\lambda,\mu)\in \widetilde{B}(\Lambda)}n_{\lambda,\mu}^{\Lambda}.\operatorname{rk} \mathbb{V}_{\vec{\lambda}_{A}, \lambda}(\frg_1,\ell_1).\operatorname{rk} \mathbb{V}_{\vec{\lambda}_{A^c},\lambda^{*}}(\frg_1,\ell_1),
\end{eqnarray*} where $n_{\lambda, \mu}^{\Lambda}$ are the difference of trace anomalies.
\end{definition}
\begin{remark}
The integer $b_{A,A^c}$ depends on the choice of $\Lambda$ for a given boundary divisor $A\cup A^c$ and the weights $\vec{\lambda}$. 

\end{remark}
 Let $\mathcal{B}=\operatorname{Spec}\mathbb{C}[[t]]$. Suppose $\mathcal{V}$ and $\mathcal{W}$ are vector bundles on $\mathcal{B}$ of same rank and let $\mathcal{L}$ be a line bundle on $\mathcal{B}$. Consider a bilinear map $f: \mathcal{V}\otimes \mathcal{W} \rightarrow \mathcal{L}$. Assume that on $\mathcal{B}$, there  are isomorphisms
\begin{eqnarray*}
\oplus {s_i}: \mathcal{V} \rightarrow &  \bigoplus_{i\in I} \mathcal{V}_i \ \  \text{and} \ \ \oplus t_j :  \mathcal{W}  \rightarrow &  \bigoplus_{j\in I}\mathcal{W}_j.
\end{eqnarray*}
Further assume that $\mathcal{V}_i$ and $\mathcal{W}_i$ have the same rank. Let $f_{i,j}$ be maps from $\mathcal{V}_i\otimes \mathcal{W}_j \rightarrow {\mathcal{L}}$ such that $f_{i,j}=0$ for $i\neq j$ and $f=\sum_{i\in I} t^{m_i}(f_{i,i}\circ (s_i\otimes t_i))$. 
\begin{lemma}\label{keygeometry} 
The map $f$ is non-degenerate on $\mathcal{B}^*=\mathcal{B}\setminus \{ t=0\}$ if and only if for all $i\in I$ the maps $f_{i,i}$'s are non-degenerate.

\end{lemma}
We now reveal the geometric significance of the integer $b_{A,A^c}$. Let $\operatorname{rk}\mathbb{V}_{\vec{\Lambda}}(\frg,1)=1$ and we assume that the rank-level duality map is non-degenerate for the tuple $(\vec{\lambda},\vec{\mu}, \vec{\Lambda})$. Further assume that for all $\Lambda \in P_1(\frg)$ such that $\operatorname{rk}\mathbb{V}_{\vec{\Lambda}_A,\Lambda}(\frg,1)=1$, there is an unique bijection between $P^{\Lambda}_{\ell_1}(\frg_1)$ and $P^{\Lambda}_{\ell_2}(\frg_2)$.  Then we have the following proposition:
\begin{proposition}\label{geo}
The integer $b_{A,A^c}$ is the order of vanishing of the determinant of the rank-level duality map on $D_{A,A^c}$. 
\end{proposition}

\begin{proof} Consider the family of genus $0$ curves  $\mathcal{F}:\mathcal{C}\rightarrow \mathcal{B}$ as in Proposition \ref{keydegen} and let $\mathcal{C}_0=C_1\cup C_2$ be a nodal curve with two smooth components meeting at a points. Corresponding to the partition $(A,A^c)$, let the points corresponding to $A$ (resp $A^c$) be on the component $C_1$ (resp $C_2$). The assumptions guarantee that the matrix of the rank-level duality map is block-diagonal. Now the proof follows directly from Proposition \ref{keydegen} and Lemma \ref{keygeometry} applied to the determinant of the rank-level duality map associated to the family $\mathcal{F}$.
\end{proof}

The above discussion can be summarized as follows: 

\begin{proposition}\label{geometry1}
The line bundle  $\det \mathbb{V}_{\vec{\lambda}}(\frg_1,\ell_1)\otimes\det\mathbb{V}_{\vec{\mu}}(\frg_2,\ell_2)$ is isomorphic to the following:
 $$\bigg(\det \mathbb{V}_{\vec{\Lambda}}(\frg,1)\otimes \bigotimes_{i=1}^n\mathbb{L}_i^{\otimes n_{\lambda_i, \mu_i}^{\Lambda_i}}\bigg)^{\otimes{ \operatorname{rk}\mathbb{V}_{\vec{\lambda}}(\frg_1,\ell_1)}}\bigotimes \mathcal{O}_{\mon}\bigg(
 -\sum_{i=2}^{\lfloor \frac{n}{2} \rfloor}\epsilon_i\sum_{\substack{A \subseteq \{1,\dots, n\}\\ |A|=i}} b_{A,A^c}D_{A,A^c}\bigg),$$ where $\epsilon_i=\frac{1}{2}$ if $i=n/2$ and one otherwise. Hence the Chern classes of the line bundles should be equal.
\end{proposition}

\begin{remark}
 Our formulation of Theorem \ref{main} is a consequence of Proposition \ref{geometry1}. The proof of Theorem \ref{main} does not require the stronger assumptions of Proposition \ref{geo}.
\end{remark}

\section{Proof of Theorem \ref{main}}
In this section we give a complete proof of Theorem \ref{main}. For convenience we recall the assumptions on $\vec{\Lambda}$, $\vec{\mu}$ and $\vec{\lambda}$ in Theorem \ref{main}. They are as follows:


\begin{enumerate}

\item  $\operatorname{rk}\mathbb{V}_{\vec{\Lambda}}(\frg,1)=1$, $\operatorname{rk}\mathbb{V}_{\vec{\lambda}}(\frg_1,\ell_1)=\operatorname{rk} \mathbb{V}_{\vec{\mu}}(\frg_2,\ell_2)$, and $(\lambda_i,\mu_i)\in \widetilde{B}(\Lambda_i)$ for $1\leq i \leq n$.
\item There exists a bijection $f_{\Lambda}$ between $P^{\Lambda}_{\ell_1}(\frg_1)$ and $P^{\Lambda}_{\ell_2}(\frg_2)$ with the property $(\lambda, \mu) \in \widetilde{B}(\Lambda)$, where $\mu=f_{\Lambda}(\lambda)$, $A$ is a subset of $\{1,\dots,n \}$ with $|A|>2$ and $\operatorname{rk}\mathbb{V}_{\vec{\Lambda}_A,\Lambda}(\frg,1)=1$.

\item For every $(\lambda, \mu)\in \widetilde{B}(\Lambda)$, we assume $$\operatorname{rk} \mathbb{V}_{\vec{\lambda}_{A}, \lambda}(\frg_1,\ell_1)=\operatorname{rk} \mathbb{V}_{\vec{\mu}_{A},\mu}(\frg_2,\ell_2) \ \text{and} \ \operatorname{rk} \mathbb{V}_{\vec{\lambda}_{A^c}, \lambda^*}(\frg_1,\ell_1)=\operatorname{rk} \mathbb{V}_{\vec{\mu}_{A^c},\mu^*}(\frg_2,\ell_2),$$ where $\Lambda$ is the unique weight in $P_1(\frg)$ such that $\operatorname{rk}\mathbb{V}_{\vec{\Lambda}_A,\Lambda}(\frg,1)=1$ and $\mu=f_{\Lambda}(\lambda)$.

\end{enumerate}

\begin{lemma}\label{calculation0}
For any $\lambda \in P_{\ell_1}(\frg_1) \backslash P^{\Lambda}_{\ell_1}(\frg_1)$, 
$$\operatorname{rk}\mathbb{V}_{\vec{\lambda}_A,\lambda}(\frg_1,\ell_1)=0.$$
\end{lemma}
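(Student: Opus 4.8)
The plan is to show that whenever $\lambda \notin P^{\Lambda}_{\ell_1}(\frg_1)$, the conformal block bundle $\mathbb{V}_{\vec{\lambda}_A,\lambda}(\frg_1,\ell_1)$ has rank zero, by exhibiting an obstruction of lattice-theoretic type. The key observation is that the vanishing of a conformal block on $\pone$ is controlled by the action of the center of the simply connected group, equivalently by a congruence condition modulo the root lattice $Q$. Recall that $P^{\Lambda}_{\ell_1}(\frg_1) = \{\lambda \in P_{\ell_1}(\frg_1) \mid \Lambda - \lambda \in Q\}$, where here $\Lambda$ is understood via the branching data so that the relevant weights of $\frg_1$ inherit a fixed coset in $\mathfrak{h}^*/Q$. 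The statement to prove is precisely that if $\lambda$ lies in the wrong coset, then the space of conformal blocks with insertions $\vec{\lambda}_A$ together with $\lambda$ vanishes identically.

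First I would make precise the congruence that the nonvanishing of $\operatorname{rk}\mathbb{V}_{\vec{\nu}}(\frg_1,\ell_1)$ forces on the weights $\vec{\nu}$. The standard fact (deducible from the Verlinde formula, or more conceptually from the invariance of conformal blocks under the center acting through the zero-modes) is that $\operatorname{rk}\mathbb{V}_{\vec{\nu}}(\frg_1,\ell_1) \neq 0$ only if the sum of the $\nu_i$ lies in the root lattice $Q$ of $\frg_1$ — equivalently the tuple must be ``balanced'' in $P_+/Q \cong Z(\frg_1)^\vee$. I would state this as a preliminary: the center $Z$ of the simply connected group acts on $\mathcal{H}_{\nu_i}(\frg_1)$ by a character depending only on the image of $\nu_i$ in $P/Q$, and a nonzero invariant (hence a nonzero conformal block) requires the product of these characters to be trivial.

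Next I would apply this to the tuple $(\vec{\lambda}_A, \lambda)$. By assumption each $\lambda_a$ for $a \in A$ satisfies $(\lambda_a,\mu_a) \in \widetilde B(\Lambda_a)$, and from the branching property recorded in Section \ref{conformal} each such $\lambda_a$ lies in the coset determined by $\Lambda_a$, i.e. $\Lambda_a - \lambda_a \in Q$. Summing these congruences and using the factorization choice of $\Lambda$ with $\operatorname{rk}\mathbb{V}_{\vec{\Lambda}_A,\Lambda}(\frg,1)=1$ (which forces $\sum_{a\in A}\Lambda_a + \Lambda^*$, and hence the corresponding $\frg_1$-coset, to be the trivial coset), I would deduce that the sum $\sum_{a\in A}\lambda_a$ lies in a fixed coset modulo $Q$, namely the one represented by the weight associated to $\Lambda$. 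Therefore the balancing condition for the tuple $(\vec{\lambda}_A,\lambda)$ can hold only if $\lambda$ itself lies in the coset $\Lambda + Q$, that is, only if $\lambda \in P^{\Lambda}_{\ell_1}(\frg_1)$. Contrapositively, if $\lambda \in P_{\ell_1}(\frg_1)\setminus P^{\Lambda}_{\ell_1}(\frg_1)$, the balancing fails and $\operatorname{rk}\mathbb{V}_{\vec{\lambda}_A,\lambda}(\frg_1,\ell_1)=0$.

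The main obstacle I anticipate is pinning down the exact bookkeeping between the coset of the $\frg$-weight $\Lambda$ and the induced coset on $\frg_1$-weights under the conformal embedding $\phi:\frg_1\oplus\frg_2\to\frg$: one must verify that the branching $\widetilde B(\Lambda)$ is compatible with the root lattices in the sense that $(\lambda,\mu)\in\widetilde B(\Lambda)$ pins $\lambda$ to a single class in $P(\frg_1)/Q(\frg_1)$ determined by $\Lambda$, and that this class is exactly the one defining $P^{\Lambda}_{\ell_1}(\frg_1)$. This is essentially the content of the remark that $(\lambda,\mu)\in\widetilde B(\Lambda)$ forces $(\lambda,\mu)\in P^{\Lambda}_{\ell_1}(\frg_1)\times P^{\Lambda}_{\ell_2}(\frg_2)$, so the lemma should follow once the center-character vanishing criterion is combined with that constraint; I would make sure the level-one hypothesis on $\frg$ and the rank-one factorization are used to fix $\Lambda$ uniquely for the given partition.
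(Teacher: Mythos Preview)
Your proposal is correct and takes essentially the same approach as the paper: both rely on the standard fact that a conformal block on $\pone$ vanishes unless the sum of the attached weights lies in the root lattice, and both conclude by noting that $\lambda + \sum_{a\in A}\lambda_a \notin Q(\frg_1)$ when $\lambda \notin P^{\Lambda}_{\ell_1}(\frg_1)$. The paper's proof is a single sentence asserting exactly this, whereas you have (helpfully) unpacked the bookkeeping---in particular the point that each $\lambda_a$ inherits the coset of $\Lambda_a$ from $(\lambda_a,\mu_a)\in\widetilde B(\Lambda_a)$ and that the factorization weight $\Lambda$ fixes the coset of $\sum_{a\in A}\lambda_a$---which the paper leaves implicit.
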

\begin{proof}
If $\lambda \in P_{\ell_1}(\frg_1)\backslash P^{\Lambda}_{\ell_1}(\frg_1)$, then $\lambda +\sum_{a\in A}\lambda_a $ is not in the root lattice. Hence it does not have any invariants. 
\end{proof}
By the second assumption, we identity $P^{\Lambda}_{\ell_1}(\frg_1)$ and $P^{\Lambda}_{\ell_2}(\frg_2)$ via the given bijection $f_{\Lambda}$.  
We use Proposition \ref{rewrite}, Lemma \ref{calculation0} and write the sum of  $c_1(\mathbb{V}_{\vec{\lambda}}(\frg_1,\ell_1))$ and $c_1(\mathbb{V}_{\vec{\mu}}(\frg_2,\ell_2))$ as follows:

\begin{eqnarray*}
&&\operatorname{rk}\mathbb{V}_{\vec{\lambda}}(\frg_1,\ell_1)  \bigg( \sum_{j=1}^n \big(\Delta_{\lambda_j}(\frg_1,\ell_1)+ \Delta_{\mu_j}(\frg_2,\ell_2)\big)\psi_j\bigg)-\\
&&\ \ \sum_{i=2}^{\lfloor \frac{n}{2} \rfloor}\epsilon_i  \sum_{\substack{A \subset \{1,2,\dots,n \} \\ |A|=i}}\bigg(\sum_{\lambda \in P^{\Lambda}_{\ell_1}(\frg_1)}\big(\Delta_{\lambda}(\frg_1,\ell_1)+ \Delta_{\mu}(\frg_2,\ell_2)\big).\operatorname{rk}\mathbb{V}_{\vec{\lambda}_A,\lambda}(\frg_1,\ell_1).\\
&& \hspace{10cm}\operatorname{rk}\mathbb{V}_{{\vec{\lambda}}_{A^c},\lambda^*}(\frg_1,\ell_1)\bigg).[D_{A,A^c}],
\end{eqnarray*}
where $\epsilon_i=\frac{1}{2}$ if $i=n/2$ and one otherwise, and $\mu$ is of the form $f_{\Lambda}(\lambda)$. 

Since the embedding is conformal, we know that the difference $n^{\Lambda_i}_{\lambda_i,\mu_i}$ of trace anomalies  $\Delta_{\lambda_i}(\frg_1,\ell_1)+ \Delta_{\mu_i}(\frg_2,\ell_2)-\Delta_{\Lambda_i}(\frg,1)$ is a non-negative integer. Thus the above expression can be rewritten as follows:
\begin{eqnarray*}
&& \operatorname{rk}\mathbb{V}_{\vec{\lambda}}(\frg,\ell)  \bigg( \sum_{j=1}^n \big(\Delta_{\Lambda_j}(\frg,1)+ n^{\Lambda_j}_{\lambda_j,\mu_j}\big)\psi_j\bigg)\\
&&\  -\sum_{i=2}^{\lfloor \frac{n}{2} \rfloor}\epsilon_i  \sum_{\substack{A \subset \{1,2,\dots,n \} \\ |A|=i}}\bigg(\sum_{\lambda \in P^{\Lambda}_{\ell_1}(\frg_1)}\big(\Delta_{\Lambda}(\frg,1)+ n^{\Lambda}_{\lambda,\mu}\big).\operatorname{rk}\mathbb{V}_{\vec{\lambda}_A,\lambda}(\frg_1,\ell_1).\operatorname{rk}\mathbb{V}_{{\vec{\lambda}}_{A^c},\lambda^*}(\frg_1,\ell_1)\bigg)[D_{A,A^c}],\\
\end{eqnarray*}
where $\epsilon_i=\frac{1}{2}$ if $i=n/2$ and one otherwise.
The rest of the proof follows from the following lemma: 
\begin{lemma}\label{calculation1}
The following equality holds:
\begin{eqnarray*}
&&\sum_{\substack{ A \subset \{1,\dots,n \}\\ |A|=i}} \sum_{\lambda\in P^{\Lambda}_{\ell_1}(\frg_1)} \Delta_{\Lambda}(\frg,1)\operatorname{rk}\mathbb{V}_{\vec{\lambda}_A,\lambda}(\frg_1,\ell_1).\operatorname{rk}\mathbb{V}_{{\vec{\lambda}}_{A^c},\lambda^*}(\frg_1,\ell_1)\\
&& \quad =\operatorname{rk}\mathbb{V}_{\vec{\lambda}}(\frg_1,\ell_1).\sum_{\substack{ A \subset \{1,\dots,n \}\\ |A|=i}} \sum_{\Lambda\in P_{1}(\frg)} \Delta_{\Lambda}(\frg,1)\operatorname{rk}\mathbb{V}_{\vec{\Lambda}_A,\Lambda}(\frg,1).\operatorname{rk}\mathbb{V}_{{\vec{\Lambda}}_{A^c},\Lambda^*}(\frg,1).
\end{eqnarray*}
\end{lemma}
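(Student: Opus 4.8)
The plan is to prove Lemma \ref{calculation1} by establishing a term-by-term correspondence between the two sides, fixing a subset $A$ of size $i$ and comparing the inner sums. The essential idea is that the conformal embedding together with assumptions (1)--(3) forces the factorized ranks at level one to control the factorized ranks of the two smaller algebras. First I would fix $A$ and analyze the right-hand inner sum: by factorization (see \cite{TUY}), for the level-one bundle of rank one, for each partition $A\cup A^c$ there is a \emph{unique} $\Lambda\in P_1(\frg)$ with $\operatorname{rk}\mathbb{V}_{\vec{\Lambda}_A,\Lambda}(\frg,1)=1$ and $\operatorname{rk}\mathbb{V}_{\vec{\Lambda}_{A^c},\Lambda^*}(\frg,1)=1$; all other $\Lambda$ contribute zero. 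Hence the right-hand side collapses to a single term $\operatorname{rk}\mathbb{V}_{\vec{\lambda}}(\frg_1,\ell_1)\cdot\Delta_{\Lambda}(\frg,1)$ for this distinguished $\Lambda$.

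The heart of the argument is to show that the left-hand inner sum equals the same quantity. For the distinguished $\Lambda$, assumptions (2) and (3) give, for every $(\lambda,\mu)\in\widetilde{B}(\Lambda)$ with $\mu=f_{\Lambda}(\lambda)$, the equality $\operatorname{rk}\mathbb{V}_{\vec{\lambda}_A,\lambda}(\frg_1,\ell_1)=\operatorname{rk}\mathbb{V}_{\vec{\mu}_A,\mu}(\frg_2,\ell_2)$, and the bijection $f_{\Lambda}$ identifies $P^{\Lambda}_{\ell_1}(\frg_1)$ with $P^{\Lambda}_{\ell_2}(\frg_2)$. By Lemma \ref{calculation0}, the sum over $\lambda\in P^{\Lambda}_{\ell_1}(\frg_1)$ may be taken over all of $P_{\ell_1}(\frg_1)$ without changing its value, since terms outside $P^{\Lambda}_{\ell_1}(\frg_1)$ vanish. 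I would then recognize the resulting quantity
\[
\sum_{\lambda\in P_{\ell_1}(\frg_1)}\operatorname{rk}\mathbb{V}_{\vec{\lambda}_A,\lambda}(\frg_1,\ell_1)\cdot\operatorname{rk}\mathbb{V}_{\vec{\lambda}_{A^c},\lambda^*}(\frg_1,\ell_1)
\]
as precisely $\operatorname{rk}\mathbb{V}_{\vec{\lambda}}(\frg_1,\ell_1)$, by the factorization theorem applied to $\frg_1$ at level $\ell_1$. This is the crucial simplification: the left inner sum is a factorization of the total rank across the partition $A\cup A^c$.

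Combining these, the left inner sum equals $\Delta_{\Lambda}(\frg,1)\cdot\operatorname{rk}\mathbb{V}_{\vec{\lambda}}(\frg_1,\ell_1)$, matching the collapsed right-hand inner sum for the same $\Lambda$. Summing over all $A$ of size $i$ then gives the claimed identity. The main obstacle I anticipate is bookkeeping the dependence of $\Lambda$ on $A$ and verifying that the $\Delta_{\Lambda}(\frg,1)$ factor pulls out correctly: on the left the weight $\lambda$ ranges over $P^{\Lambda}_{\ell_1}(\frg_1)$ for the $A$-dependent $\Lambda$, so one must check that $\Delta_{\Lambda}(\frg,1)$ is genuinely constant across the inner $\lambda$-sum (it is, since $\Lambda$ is determined by $A$ alone and does not vary with $\lambda$), allowing it to be factored out before applying factorization. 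Once this constancy is made explicit, the identity is a direct consequence of factorization applied separately to $\frg$ at level one and to $\frg_1$ at level $\ell_1$.
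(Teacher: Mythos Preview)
Your proposal is correct and follows essentially the same approach as the paper: fix $A$, use the uniqueness of $\Lambda$ from rank-one factorization at level one to collapse the right-hand inner sum, factor $\Delta_{\Lambda}(\frg,1)$ out of the left-hand inner sum, and identify that inner sum with $\operatorname{rk}\mathbb{V}_{\vec{\lambda}}(\frg_1,\ell_1)$ via factorization for $\frg_1$ (extending the range to all of $P_{\ell_1}(\frg_1)$ via Lemma~\ref{calculation0}). The only minor remark is that your invocation of assumptions (2) and (3) is not actually needed for this lemma---it involves only $\frg_1$ and $\frg$, not $\frg_2$---so that paragraph can be dropped without loss.
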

\begin{proof} By our assumption $\operatorname{rk}\mathbb{V}_{\vec{\Lambda}}(\frg,1)=1$, there exists a unique $\Lambda \in P_1(\frg)$ (depending on the partition) for every partition $A\cup A^c$ of $\{1,\dots,n\}$ such that $\operatorname{rk}\mathbb{V}_{\vec{\Lambda}_A,\Lambda}(\frg,1)=1$. We rewrite the left hand side as follows:
\begin{eqnarray*}
&&\sum_{\substack{ A \subset \{1,\dots,n \}\\ |A|=i}} \sum_{\lambda\in P^{\Lambda}_{\ell_1}(\frg_1)} \Delta_{\Lambda}(\frg,1)\operatorname{rk}\mathbb{V}_{\vec{\lambda}_A,\lambda}(\frg_1,\ell_1).\operatorname{rk}\mathbb{V}_{{\vec{\lambda}}_{A^c},\lambda^*}(\frg_1,\ell_1)\\
&& \quad =\sum_{\substack{ A \subset \{1,\dots,n \}\\ |A|=i}} \Delta_{\Lambda}(\frg,1)\bigg(\sum_{\lambda\in P^{\Lambda}_{\ell_1}(\frg_1)} \operatorname{rk}\mathbb{V}_{\vec{\lambda}_A,\lambda}(\frg_1,\ell_1).\operatorname{rk}\mathbb{V}_{{\vec{\lambda}}_{A^c},\lambda^*}(\frg_1,\ell_1)\bigg)\\
&&  \quad =\operatorname{rk}\mathbb{V}_{\vec{\lambda}}(\frg_1,\ell_1).\sum_{\substack{ A \subset \{1,\dots,n \}\\ |A|=i}} \Delta_{\Lambda}(\frg,1)\\
&&  \quad =\operatorname{rk}\mathbb{V}_{\vec{\lambda}}(\frg_1,\ell_1).\sum_{\substack{ A \subset \{1,\dots,n \}\\ |A|=i}} \sum_{\Lambda\in P_{1}(\frg)} \Delta_{\Lambda}(\frg,1)\operatorname{rk}\mathbb{V}_{\vec{\Lambda}_A,\Lambda}(\frg,1).\operatorname{rk}\mathbb{V}_{{\vec{\Lambda}}_{A^c},\Lambda^*}(\frg,1).
\end{eqnarray*} This completes the proof of the lemma.
\end{proof}

\begin{remark}\label{higher}
There are some strange duality results \cite{A, BP, Bel1, MO} that holds for smooth curves of arbitrary genus. In these cases, the objects involved are conformal blocks on side but some other geometric objects (for example: non-abelian theta functions for vector bundles of fixed degree and rank) on the other side. It is not clear how to define these geometric objects using conformal blocks ( for conformal blocks, we need the Lie algebra to be semisimple). Moreover, it is not clear how these strange dualities  behave over the boundary of the moduli of curves. It will be an interesting and challenging question to study the boundary behavior of these strange dualities and figure out relations analogous to Theorem \ref{main}. 
\end{remark}
\subsection{Examples of the relations in type A}\label{explicit}In this section, we consider rank-level dualities that come from the conformal embedding $\mathfrak{sl}(2)\oplus \mathfrak{sl}(3) \rightarrow \mathfrak{sl}(6)$ over $\overline{\operatorname{M}}_{0,5}$ and $\overline{\operatorname{M}}_{0,6}$ and compute the order of vanishing and the coefficients of $\psi_i$'s in the relations given by Theorem \ref{main}.

\begin{example}
On $\overline{\operatorname{M}}_{0,5}$, let $\vec{\Lambda}=(0,\omega_3,\omega_3,\omega_3,\omega_3)$, $\vec{\lambda}=(2\omega_1, \omega_1,\omega_1,\omega_1,\omega_1)$ and $\vec{\mu}=(\omega_1+\omega_2,\dots,\omega_1+\omega_2)$. This choice of $\vec{\Lambda}$, $\vec{\lambda}$ and $\vec{\mu}$ satisfies the hypothesis of Theorem \ref{main}. Using factorization, we get $\rk \mathbb{V}_{\vec{\lambda}}(\mathfrak{sl}(2),3)=\rk \mathbb{V}_{\vec{\mu}}(\mathfrak{sl}(3),2)=3$. It follows from Example \ref{branchingrules} that $n^{\Lambda_1}_{\lambda_1,\mu_1}=1$ and $n^{\Lambda_i}_{\lambda_i,\mu_i}=0$ for all $i\geq 2$. 
 
The boundary divisors of $\overline{\operatorname{M}}_{0,5}$ are of the form $D_{A,A^c}$, where $A$ is a two element subset of $\{1,\dots,5\}$. We will now compute the order of vanishing $b_{A,A^c}$ along $D_{A,A^c}$.

\begin{itemize}
\item  First, we consider $A=\{1,2\}$. In this case, the unique $\Lambda$ such that $\rk \mathbb{V}_{\vec{\Lambda}_A,\Lambda}(\mathfrak{sl}(6),1)=1$ is $\omega_3$. Now using the calculations in Example \ref{branchingrules}, we get $b_{A,A^c}=0$. By symmetry, it follows that $b_{A,A^c}=0$ for any two element subset $A$ containing $1$. 

\item Now, we let $A=\{2,3\}$. In this case, the unique $\Lambda$ such that $\rk \mathbb{V}_{\vec{\Lambda}_A,\Lambda}(\mathfrak{sl}(6),1)=1$ is $0$. By factorization, we get 
$\rk \mathbb{V}_{(2\omega_1,\omega_1,\dots, \omega_1)}(\mathfrak{sl}(3),2)$ is the sum of $\rk \mathbb{V}_{(\omega_1,\omega_1,2\omega_1)}(\mathfrak{sl}(2),3).\rk \mathbb{V}_{(2\omega_1,2\omega_1,\omega_1,\omega_1)}(\mathfrak{sl}(2),3)$ and $\rk \mathbb{V}_{(\omega_1,\omega_1)}(\mathfrak{sl}(2),3).\rk \mathbb{V}_{(2\omega_1,\omega_1,\omega_1)}(\mathfrak{sl}(2),3)$. Now again by Example \ref{branchingrules} and the definition of $b_{A,A^c}$, we get $b_{A,A^c}$ is $1.\rk \mathbb{V}_{(\omega_1,\omega_1,2\omega_1)}(\mathfrak{sl}(2),3).\rk \mathbb{V}_{(2\omega_1,2\omega_1,\omega_1,\omega_1)}(\mathfrak{sl}(2),3) + 0=2$. Now by symmetry, we get $b_{A,A^c}=2$ for any two element subset $A$ not containing $1$.
\end{itemize}

Further Proposition 1.3 in \cite{BGM} gives us $c_1(\mathbb{V}_{\vec{\lambda}}(\mathfrak{sl}(2),3))=0$. Hence the relation in $\operatorname{Pic}(\overline{\operatorname{M}}_{0,5})$ given by Theorem \ref{main} is the following:
$$c_1(\mathbb{V}_{(\omega_1+\omega_2,\dots,\omega_1+\omega_2)}(\mathfrak{sl}(3),2)=3(c_1(\mathbb{V}_{(0,\omega_3,\omega_3,\omega_3,\omega_3)}(\mathfrak{sl}(6),1))+ \psi_1)- 2\sum_{\substack{ A \subset \{2,3,4,5 \}\\ |A|=2,}}[D_{A,A^c}].$$
\end{example}

\begin{example}
On $\overline{\operatorname{M}}_{0,6}$, let $\vec{\Lambda}=(\omega_1,\dots, \omega_1)$, $\vec{\lambda}=(3\omega_1,\dots,3\omega_1)$ and $\vec{\mu}=(2\omega_2,\dots, 2\omega_2)$. It follows that $\rk \mathbb{V}_{\vec{\Lambda}}(\mathfrak{sl}(6),1)$ is one and all hypothesis of Theorem \ref{main} are satisfied. It follows \cite{M2} that $\rk \mathbb{V}_{\vec{\lambda}}(\mathfrak{sl}(2),3)=\rk \mathbb{V}_{\vec{\mu}}(\mathfrak{sl}(3),2)=1$. Example \ref{branchingrules} tells us that the integer $n^{\Lambda_i}_{\lambda_i,\mu_i}=1$, for all $1\leq i \leq n$. 

Since the conformal blocks in this example are $S_6$ invariant, it enough to compute $b_{A,A^c}$ when $A$ is $\{1,2\}$ and $\{1,2,3\}$. First we consider the case when $A=\{1,2\}$. In this case $\Lambda=\omega_4$, $\lambda=0$ and $\rk \mathbb{V}_{\vec{\lambda}_A,\lambda}=1$ and $\rk \mathbb{V}_{\vec{\lambda}_{A^c},\lambda^*}=1$. Example \ref{branchingrules} tells us $\mu=2\omega_2$ and $n^{\Lambda}_{\lambda,\mu}=0$. Using the definition and the above discussion, we get $b_{A,A^c}=0$. A similar calculation for $A=\{1,2,3\}$ shows that $b_{A,A^c}=0$. Thus by Theorem \ref{main}, the following relation holds in $\operatorname{Pic}(\overline{\operatorname{M}}_{0,6})$:
$$c_1(\mathbb{V}_{\vec{3\omega_1}}(\mathfrak{sl}(2),3))+c_1(\mathbb{V}_{\vec{2\omega_2}}(\mathfrak{sl}(3),2)=c_1(\mathbb{V}_{\vec{\omega}_1}(\mathfrak{sl}(6),1)) + \sum_{i=1}^6 \psi_i.$$
\end{example}

\section{Rank level duality in type A}\label{strangevanishing} In this section we apply our relation on conformal divisors of type $A$ and derive information about ranks of certain conformal blocks. Let $Y_{r,s}$ denote the set of Young diagrams with at most $r$ rows and $s$ columns. For $\lambda=(s\geq \lambda^1 \geq \dots \geq  \lambda^r) \in Y_{r,s}$, one can associate an irreducible $\operatorname{GL}_r$-module $V_{\lambda}$. Two Young diagrams $\lambda_1$ and $\lambda_2$ define the same $\operatorname{SL}_r$ representation if $\lambda_1^i-\lambda_2^i$ is a constant independent of $i$. It is clear that for $\lambda \in Y_{r,s}$, the corresponding dominant integral weight $\lambda \in P_{s}(\mathfrak{sl}(r))$. 

We consider the embedding $\mathfrak{sl}(r)\oplus \mathfrak{sl}(s) \rightarrow  \mathfrak{sl}(rs)$ induced by tensor product $\mathbb{C}^r$ with $\mathbb{C}^s$. It is known that the embedding is conformal with Dynkin multi-index $(s,r)$. The branching rule in \cite{ABI} tells us $(\lambda, \lambda^T) \in B(\omega_{|\lambda|})$, where $|\lambda|$ denotes the number of boxes in the Young diagram of $\lambda$ and $\omega_{|\lambda|}$ denotes the $|\lambda|$-th fundamental weight of $\mathfrak{sl}(rs)$.

Consider $\vec{\lambda} \in Y_{r,s}^n$ such that $\sum_{i=1}^n|\lambda_i|=rs$. Let $\lambda^{T}=(\lambda_1^T,\dots, \lambda_n^T)$ and $\vec{\Lambda}=(\Lambda_1, \dots, \Lambda_n)$, where $\Lambda_i=\omega_{|\lambda_i|}$. It is known that $\operatorname{rk}\mathbb{V}_{\vec{\Lambda}}(\mathfrak{sl}(rs),1)=1$.
\begin{corollary}\label{thetalevel}
With the above notation, we get 
$$c_1(\mathbb{V}_{\vec{\lambda}} (\mathfrak{sl}(r), s))= c_1(\mathbb{V}_{\vec{\lambda}^T}(\mathfrak{sl}(s), r))=0.$$ 
\end{corollary}
\begin{proof} It follows from the rank-level duality  in \cite{NT} that the  above conditions satisfy the axioms of Theorem ~\ref{main}. 
 Further Proposition 5.2 in \cite{F} tells us $c_1(\mathbb{V}_{\vec{\Lambda}}(\mathfrak{sl}(rs),  1)=0$. We also observe that the difference of trace anomaly $n^{\Lambda_i}_{\lambda_i,\lambda_i^T}=0$. Now Theorem \ref{main} tells us the following:
$$c_1(\mathbb{V}_{\vec{\lambda}} (\mathfrak{sl}(r), s))+ c_1(\mathbb{V}_{\vec{\lambda}^T}(\mathfrak{sl}(s), r))+ \mbox{effective sums of boundary divisors}=0.$$ Since conformal blocks bundles are globally generated, this implies that $c_1(\mathbb{V}_{\vec{\lambda}} (\mathfrak{sl}(r), s))+ c_1(\mathbb{V}_{\vec{\lambda}^T}(\mathfrak{sl}(s), r))=0$. But again globally generation forces each of them to be zero.  This completes the proof. 
\end{proof}
\begin{remark}
Corollary \ref{thetalevel} is same as ``above the critical level vanishing" in \cite{BGM}. This was pointed out by an anonymous referee. Further, in the setting of Theorem \ref{main} for arbitrary conformal embeddings $\frg_1\oplus \frg_2 \rightarrow \frg$, if we choose weights such that $c_1(\mathbb{V}_{\vec{\Lambda}}(\frg,1))=0$ and $n^{\Lambda_i}_{\lambda_i,\mu_i}=0$ for all $1\leq i \leq n$, we get vanishing results analogous to Corollary \ref{thetalevel}. The only known non-trivial examples ( follows from Section 5 in \cite{F}) satisfying the hypothesis mentioned above are the ones in Corollary \ref{thetalevel}. It will be an interesting question to find more examples of rank-level duality that satisfies the hypothesis discussed here.
\end{remark}
 Let $D_{A,A^c}$ be a boundary divisor given by the partition $A\cup A^c$ and $\Lambda \in P_1(\mathfrak{sl}(rs))$  be such that $\operatorname{rk}\mathbb{V}_{\vec{\Lambda}_A,\Lambda}(\mathfrak{sl}(rs),1)=1$.  With the above notation, we have the following corollary:
\begin{corollary}\label{vanishing} If $\lambda \in \widetilde{B}(\Lambda)\backslash B(\Lambda)$, then $\mathbb{V}_{\vec{\lambda}_A, \lambda}(\mathfrak{sl}(r),s) \otimes\mathbb{V}_{\vec{\lambda}_{A^c},\lambda^*}(\mathfrak{sl}(r),s)$ is zero. Hence 
$$c_1(\mathbb{V}_{\vec{\lambda}_A, \lambda}(\mathfrak{sl}(r),s))=c_1(\mathbb{V}_{\vec{\lambda}_{A^c},\lambda^*}(\mathfrak{sl}(r),s))=0.$$
\end{corollary}

\begin{remark} The ``theta-level" defined in \cite{BGM} is at least one if all the weights $\lambda_i$'s of $\mathbb{V}_{\vec{\lambda}}(\frg,\ell)$ are non trivial. Hence repeating the strategy of Corollary \ref{thetalevel} do not yield any new vanishing result. The vanishing statement in Corollary \ref{vanishing} is new  and is different from known vanishing results about conformal blocks in the existing literature.
\end{remark}

\bibliographystyle{plain}
\def\noopsort#1{}

\end{document}